%
%

\documentclass[11pt]{article}
\usepackage{amsthm}
\usepackage{amsmath,amsthm,amsfonts,amssymb,mathrsfs,bm,graphicx,stmaryrd,dsfont, enumerate}
\usepackage{dsfont}
\usepackage[colorlinks=true,linkcolor=blue]{hyperref}
\hypersetup{
   citecolor=blue,
   linkcolor=blue,
}
\usepackage[usenames,dvipsnames]{color}
\usepackage{fullpage}
\usepackage[american]{babel}
\usepackage[varg]{pxfonts}
\usepackage{prettyref}
\usepackage{microtype}
\usepackage{tikz}
\usepackage{pgfplots}
\pgfplotsset{compat=newest}
\usepackage[font=small,labelfont=bf]{caption}

\newtheorem{theorem}{Theorem}[section]

\newtheorem{lemma}[theorem]{Lemma}
\newtheorem{remark}[theorem]{Remark}
\newtheorem{proposition}[theorem]{Proposition}

\newtheorem{conjecture}[theorem]{Conjecture}

\newtheorem{definition}[theorem]{Definition}

\newcommand{\C}{\mathbb{C}}

\renewcommand{\P}{\mathbb{P}}

\newcommand{\vol}{\mathrm{vol}}
\newcommand{\dE}{\mathbb{E}}
\newcommand{\dP}{\mathbb{P}}
\newcommand{\eps}{\epsilon}

\newcommand\underrel[2]{\mathrel{\mathop{#2}\limits_{#1}}}

\title{Gaussian Regularization of the Pseudospectrum and Davies' Conjecture}
\author{Jess Banks \\ UC Berkeley \and Archit Kulkarni \\ UC Berkeley \and Satyaki Mukherjee \\ UC Berkeley \and Nikhil Srivastava \\{UC Berkeley}}
\date{\today}

\begin{document}
\maketitle
\begin{abstract} 
	A matrix $A\in\C^{n\times n}$ is diagonalizable if it has a basis of linearly independent eigenvectors. 
	Since the set of nondiagonalizable matrices has measure zero, every $A\in \C^{n\times n}$ is the limit of diagonalizable matrices. 
	We prove a quantitative version of this fact conjectured by E.B. Davies: for each $\delta\in (0,1)$, every matrix $A\in \C^{n\times n}$ is at least $\delta\|A\|$-close to one whose eigenvectors have condition number at worst $c_n/\delta$, for some $c_n$ depending only on $n$. We further show that the dependence on $\delta$ cannot
	be improved to $1/\delta^p$ for any constant $p<1$. 
	
	Our proof uses tools from random matrix theory to show that the pseudospectrum of $A$ can be regularized with the addition of a complex Gaussian perturbation.  Along the way, we explain how a variant of a theorem of \'Sniady implies a conjecture of Sankar, Spielman and Teng on the optimal constant for smoothed analysis of condition numbers.
	
	\noindent {\em Keywords: Numerical Analysis,  Non-Hermitian Random Matrix Theory, Ginibre Ensemble, Smoothed Analysis, Matrix Analysis.}
 \end{abstract}

\section{Introduction}
A matrix $A\in\C^{n\times n}$ is diagonalizable if it can be written as
$ A = VDV^{-1},$
where $D$ is diagonal and $V$ is a matrix  consisting of linearly independent eigenvectors of $A$. 
Further, $A$ is normal if and only if $V^{-1}=V^*$, or in other words if the eigenvectors can be chosen to be orthogonal.
A fundamental quantity capturing the nonnormality of a matrix is the {\em eigenvector condition number}
$$ 
	\kappa_V(A) \triangleq \inf_{V:A=VDV^{-1}} \|V\|\|V^{-1}\|,
$$
which ranges between $1$ and $\infty$ when $A$ is normal and nondiagonalizable respectively, where $\|\cdot\|$  denotes the operator norm. 
Matrices with small $\kappa_V$ enjoy many
of the desirable properties of normal matrices, such as stability of the spectrum under small perturbations (this is the content of the Bauer-Fike theorem \cite{bauer1960norms}).
In this paper we study the following question posed by E. B. Davies in \cite{davies}:
\begin{quote}
	{\em How well can an arbitrary matrix be approximated by one with a small eigenvector condition number?}
\end{quote}
Our main theorem is as follows.

\begin{theorem}\label{thm:a} 
	Suppose $A\in\C^{n\times n}$ and $\delta\in (0,1)$. Then there is a matrix $E\in\C^{n\times n}$ such that $\|E\|\le \delta\|A\|$ and
	$$ 
		\kappa_V(A+E)\le 4n^{3/2}\left(1+\frac1\delta\right).
	$$
\end{theorem}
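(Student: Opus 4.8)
The plan is to show that a small complex Gaussian perturbation of $A$ \emph{regularizes} its pseudospectrum, and to read off the eigenvector condition number of the perturbed matrix from the geometry of the regularized $\varepsilon$-pseudospectrum $\Lambda_\varepsilon(M):=\{z\in\C:\|(z-M)^{-1}\|>1/\varepsilon\}$.

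\textbf{Step 1: reduce to eigenvalue condition numbers.} Write $M=A+E$ and suppose (generically) that $M$ has distinct eigenvalues $\lambda_1,\dots,\lambda_n$ with unit right eigenvectors $v_i$ and rank-one spectral projectors $P_i$, so that $\kappa(\lambda_i):=\|P_i\|$ is the $i$-th eigenvalue condition number. Taking the eigenbasis $V=[v_1|\cdots|v_n]$ one has $\|V\|\le\|V\|_F=\sqrt n$ and $\|V^{-1}\|^2\le\|V^{-1}\|_F^2=\sum_i\|P_i\|^2=\sum_i\kappa(\lambda_i)^2$, hence $\kappa_V(M)\le\sqrt n\,\big(\sum_i\kappa(\lambda_i)^2\big)^{1/2}$. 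It therefore suffices to find $E$ with $\|E\|\le\delta\|A\|$ and $\sum_i\kappa(\lambda_i(A+E))^2\le 16\,n^2(1+1/\delta)^2$.

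\textbf{Step 2: Gaussian perturbation and the pseudospectral area.} For a simple eigenvalue the resolvent expands as $(z-M)^{-1}=P_i/(z-\lambda_i)+O(1)$ near $\lambda_i$, so the connected component of $\Lambda_\varepsilon(M)$ containing $\lambda_i$ contains, to leading order, a disk of radius $\varepsilon\,\kappa(\lambda_i)$ about $\lambda_i$; if these components are pairwise disjoint this gives $\sum_i\kappa(\lambda_i)^2\lesssim\mathrm{area}(\Lambda_\varepsilon(M))/\varepsilon^2$, provided $\varepsilon$ is small enough for the disjointness and the error term to be harmless. Now set $E=\gamma G$ with $G$ an $n\times n$ complex Ginibre matrix and $\gamma\asymp\delta\|A\|/\sqrt n$, so that $\|E\|=\gamma\|G\|\le\delta\|A\|$ holds with probability close to $1$ by operator-norm concentration. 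For fixed $z$, $s_{\min}(z-M)=\gamma\,s_{\min}\big((z-A)/\gamma-G\big)$, and the smallest-singular-value tail bound for a shifted complex Ginibre matrix — the Sankar--Spielman--Teng estimate, in the sharp form that a \'Sniady-type overlap theorem provides — yields $\P[s_{\min}(z-M)<t]\lesssim n\,t^2/\gamma^2$. Since $\mathrm{spec}(M)$ and $\Lambda_\varepsilon(M)$ lie in the disk of radius $\|A\|(1+\delta)+\varepsilon=O(\|A\|)$, integrating this estimate in $z$ bounds $\dE\,\mathrm{area}(\Lambda_\varepsilon(M))\lesssim n\|A\|^2\varepsilon^2/\gamma^2$. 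Choosing $\varepsilon\asymp\gamma/\sqrt n$ (small enough for Step 2's comparison, with the disjointness secured by the same anti-concentration input applied to the eigenvalue gaps) one obtains, with positive probability simultaneously with $\|E\|\le\delta\|A\|$, that $\sum_i\kappa(\lambda_i)^2\lesssim n\|A\|^2/\gamma^2\asymp n^2/\delta^2$; plugging into Step 1 gives $\kappa_V(A+E)\lesssim n^{3/2}/\delta$, and tracking the constants (the factor $(1+\delta)$ entering through $\|M\|\le\|A\|(1+\delta)$) yields the stated $4n^{3/2}(1+1/\delta)$.

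\textbf{The main obstacle} is Step 2's passage from the pointwise resolvent tail bound to genuine control of the global geometry of the random pseudospectrum: one must verify, at a scale $\varepsilon$ still large enough for the area estimate to be meaningful, that the pseudospectral components around distinct eigenvalues do not merge — so that each $\|P_i\|$ is computed from a contour seeing only $\lambda_i$ — and that the resulting bound loses no more than the claimed polynomial factor. In particular, any argument that union-bounds crudely over the $n$ eigenvalues would both cost extra powers of $n$ and, after the square root in Step 1, degrade the dependence on $\delta$ from $1/\delta$ to $1/\delta^2$; obtaining the sharp $1/\delta$ is precisely what forces the use of the exact second-moment structure of the complex Ginibre ensemble, which is where the \'Sniady variant and its link to the Sankar--Spielman--Teng conjecture enter.
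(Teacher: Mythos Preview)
Your overall strategy matches the paper's exactly: reduce $\kappa_V$ to $\big(n\sum_i\kappa(\lambda_i)^2\big)^{1/2}$ via Frobenius norms, relate $\sum_i\kappa(\lambda_i)^2$ to the area of $\Lambda_\varepsilon$, and bound the expected area pointwise using the \'Sniady--Edelman least-singular-value estimate $\P[\sigma_n(zI-A-\delta G_n)<\varepsilon]\le n^2\varepsilon^2/\delta^2$, integrated over a disk containing the spectrum. The probabilistic union bound with the operator-norm tail of $G_n$ and the final constant-chasing are also the same.

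The genuine gap is in your passage from expected pseudospectral area to $\sum_i\kappa(\lambda_i)^2$. You propose working at a fixed scale $\varepsilon\asymp\gamma/\sqrt n$ and securing disjointness of the pseudospectral components ``by the same anti-concentration input applied to the eigenvalue gaps.'' But the \'Sniady input is a singular-value tail for $zI-M$ at each fixed $z$; it says nothing about the separation of the random eigenvalues of $M$, which is a genuinely different and harder question for non-Hermitian perturbations. The paper sidesteps this entirely. For any fixed $M$ with simple spectrum one has the \emph{deterministic} limit
\[
\lim_{\varepsilon\to 0}\frac{\vol(\Lambda_\varepsilon(M)\cap B)}{\varepsilon^2}=\pi\sum_{\lambda_i\in B}\kappa(\lambda_i)^2,
\]
since for $\varepsilon$ small enough (depending on $M$) the components are automatically disjoint disks of radius $\varepsilon\kappa(\lambda_i)+O(\varepsilon^2)$. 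One then swaps the limit and the expectation by Fatou's lemma:
\[
\dE\sum_{\lambda_i\in B}\kappa(\lambda_i)^2=\dE\liminf_{\varepsilon\to 0}\frac{\vol(\Lambda_\varepsilon\cap B)}{\pi\varepsilon^2}\le\liminf_{\varepsilon\to 0}\frac{\dE\vol(\Lambda_\varepsilon\cap B)}{\pi\varepsilon^2}\le\frac{n^2\vol(B)}{\pi\delta^2}.
\]
No quantitative choice of $\varepsilon$, no eigenvalue-gap estimate, and no union bound over eigenvalues is needed. This Fatou step is the missing idea in your sketch; with it, the obstacle you flag simply disappears.
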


In other words, every matrix is at most inverse polynomially close to a matrix whose eigenvectors have condition number at most polynomial in the dimension. The previously best known general bound in such a result was \cite[Theorem 3.8]{davies}:
\begin{equation}\label{eqn:oldbound}\kappa_V(A+E)\le \left(\frac{n}{\delta}\right)^{(n-1)/2},\end{equation}
so our theorem constitutes an exponential improvement in the dependence on both $\delta$ and $n$. We show in Proposition \ref{prop:lowerdelta} that the
$1/\delta$-dependence in Theorem
\ref{thm:a} cannot be improved beyond $1/\delta^{1-1/n}$, so our bound is essentially optimal in $\delta$ for large $n$.

\subsection{Davies' Conjecture}\label{sec:davies}
Theorem \ref{thm:a} implies a positive resolution to a conjecture of Davies \cite{davies}.
\begin{conjecture} \label{conj:davies} For every positive integer $n$ there is a constant $c_n$ such that for every $A\in\C^{n\times n}$ with $\|A\|\le 1$ and $\epsilon \in (0, 1)$:
\begin{equation} \label{eqn:conj}
	\inf_{E\in\C^{n\times n}} \left(\kappa_V(A+E)\epsilon + \|E\|\right) \le c_n\sqrt{\epsilon}.
\end{equation}
	\begin{proof}[Proof of Conjecture \ref{conj:davies}] Given $\epsilon>0$,
	set $\delta = d_n \sqrt{\eps}$ for some $d_n > 0$ and apply Theorem \ref{thm:a}.  This yields $c_n = 4n^{3/2} + 4n^{3/2}/d_n + d_n$.  This is minimized at $d_n = 2n^{3/4}$, which yields $c_n = 4n^{3/2} + 4n^{3/4} \le 8n^{3/2}.$
	\end{proof}
\end{conjecture}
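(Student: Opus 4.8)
The plan is to prove Theorem~\ref{thm:a} by the probabilistic method, taking $E$ to be a scaled complex Gaussian (Ginibre) matrix. By homogeneity I may assume $\|A\|=1$, since $A\mapsto tA$ forces $E\mapsto tE$ and $\kappa_V$ is scale-invariant. Let $G\in\C^{n\times n}$ have i.i.d.\ $\mathcal N_{\C}(0,1/n)$ entries, so that $\|G\|$ concentrates near $2$, and set $E=\gamma G$ with $\gamma=c\delta$ for an absolute constant $c$. The discriminant of the characteristic polynomial of $A+E$ is a nonzero polynomial in the entries of $G$, so $A+E$ has simple eigenvalues almost surely, and it suffices to arrange two events to hold simultaneously: (i) $\|E\|\le\delta$, which by standard operator-norm concentration for Ginibre matrices holds with probability at least $3/4$ once $c$ is a small enough absolute constant (handling the few smallest $n$ by hand); and (ii) $\kappa_V(A+E)\le 4n^{3/2}(1+1/\delta)$, which is the crux.

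For (ii), I would first replace $\kappa_V$ by the eigenvalue condition numbers. Writing $A+E=VDV^{-1}$ with the columns $v_j$ of $V$ taken to be unit right eigenvectors, the rows of $V^{-1}$ are $w_j^*/(w_j^*v_j)$ for unit left eigenvectors $w_j$, and the rank-one spectral projector $P_j=v_jw_j^*/(w_j^*v_j)$ has $\|P_j\|_F=1/|w_j^*v_j|=:\kappa(\lambda_j)$. Therefore
\[
  \kappa_V(A+E)\ \le\ \|V\|_F\,\|V^{-1}\|_F\ =\ \sqrt{n}\,\Bigl(\sum_j\kappa(\lambda_j)^2\Bigr)^{1/2}\ =\ \sqrt{n}\,\Bigl(\sum_j\|P_j\|_F^2\Bigr)^{1/2},
\]
so it is enough to prove a smoothed estimate of the shape $\dE_G\sum_j\|P_j\|_F^2\le c_0\,n^2\,(1+1/\gamma^2)$ for an absolute constant $c_0$. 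Combining this with the display, Markov's inequality, and $1+1/\gamma^2\le(1+1/\gamma)^2$ gives $\kappa_V(A+E)\le C n^{3/2}(1+1/\delta)$ with probability at least $3/4$, and careful bookkeeping of the constants (with the freedom in the Ginibre normalization and in $c$) brings $C$ down to $4$; intersecting with event (i) completes the argument.

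The heart of the matter is thus the estimate $\dE_G\sum_j\|P_j\|_F^2\lesssim n^2(1+1/\gamma^2)$. The idea is to express each $P_j$ through the resolvent of $X:=A+\gamma G$ via $(z-X)^{-1}=\sum_j P_j/(z-\lambda_j)$, recovering $P_j$ as a contour integral around $\lambda_j$, so that $\|P_j\|_F$ is governed by the size of $\|(z-X)^{-1}\|$ on a contour separating $\lambda_j$ from the other eigenvalues; this reduces everything to the \emph{pseudospectrum} of $X$, i.e.\ to lower tail bounds on the smallest singular values $s_{\min}(z-X)$. The key random-matrix input, a variant of a theorem of \'Sniady that also pins down the optimal constant in the Sankar--Spielman--Teng smoothed analysis of condition numbers, is a pointwise bound $\P[\,s_{\min}(z-A-\gamma G)\le s\,]\lesssim ns^2/\gamma^2$ for every fixed $z\in\C$ (using that $-\gamma G$ is again a complex Ginibre shift of $z-A$). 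One must then convert this pointwise-in-$z$ information into a global bound on $\sum_j\|P_j\|_F^2$, exploiting the integrability of $\|(z-X)^{-1}\|$ near each eigenvalue together with eigenvalue-repulsion estimates for the perturbed matrix, rather than through a naive union bound over a net of $z$, which is far too lossy (it would require $\gamma\gtrsim\sqrt n$ to close).

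I expect this conversion to be the main obstacle: the eigenvalues of $X$ — hence the very contours around which the $P_j$ are computed — are random, so one cannot simply freeze $z$, and the pointwise singular-value tail must be integrated against the residue structure of the resolvent with the correct power of $n$ surviving. Establishing the sharp smoothed smallest-singular-value bound itself (the \'Sniady-type ingredient) is the other technically substantial step.
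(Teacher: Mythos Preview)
Your overall architecture matches the paper exactly: a Ginibre perturbation, the Frobenius estimate $\kappa_V(A+E)\le\sqrt{n}\bigl(\sum_j\kappa(\lambda_j)^2\bigr)^{1/2}$ (this is Lemma~\ref{lem:condineq}), and a smoothed smallest-singular-value bound $\dP[\sigma_n(z-A-\gamma G)\le s]\le n^2s^2/\gamma^2$ (Lemma~\ref{lem:anti}; note it is $n^2$, not $n$, already for the centered Ginibre case by Edelman). Where you diverge is precisely at the step you flag as the main obstacle, and the route you propose---recovering $P_j$ by contour integrals and then invoking eigenvalue-repulsion to keep the random contours away from the spectrum---is both incomplete and unnecessarily hard.

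The paper's device for this conversion sidesteps random contours entirely. The identity you need is deterministic: for any matrix $M$ with simple spectrum,
\[
\lim_{\epsilon\to 0}\frac{\vol(\Lambda_\epsilon(M)\cap B)}{\epsilon^2}=\pi\sum_{\lambda_i\in B}\kappa(\lambda_i)^2
\]
(Lemma~\ref{prop:limit}), because near each $\lambda_i$ the pseudospectrum is, to first order in $\epsilon$, a disk of radius $\kappa(\lambda_i)\epsilon$. With this in hand the global estimate falls out in two lines: by Fubini and the pointwise singular-value bound,
\[
\dE\,\vol\bigl(\Lambda_\epsilon(A+\gamma G)\cap B\bigr)=\int_B\dP[\sigma_n(z-A-\gamma G)<\epsilon]\,dz\le \frac{n^2\epsilon^2}{\gamma^2}\vol(B),
\]
and then Fatou's lemma pushes the $\epsilon\to 0$ limit inside the expectation to give $\dE\sum_{\lambda_i\in B}\kappa(\lambda_i)^2\le n^2\vol(B)/(\pi\gamma^2)$. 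Taking $B$ a disk of radius $\|A\|+O(\gamma)$ (which contains all eigenvalues on the high-probability event $\|G\|=O(1)$) yields $\dE\sum_j\kappa(\lambda_j)^2\lesssim n^2(1+1/\gamma)^2$, and the rest of your outline goes through. No eigenvalue gaps are needed; the randomness of the spectrum is absorbed by integrating the indicator of the pseudospectrum over $z$ \emph{before} taking the limit in $\epsilon$.
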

The phrasing of Conjecture \ref{conj:davies} is motivated by a particular application in numerical analysis. 
Suppose one wants to evaluate analytic functions $f(A)$ of a given matrix $A$, which may be nonnormal. 
If $A$ is diagonalizable, one can use the formula $f(A)=Vf(D)V^{-1}$, where $f(D)$ means the function is applied to the scalar diagonal entries of $D$. 
However, this may be numerically infeasible if $\kappa_V(A)$ is very large: if all computations are carried to precision $\epsilon$, the result may be off by an error of $\kappa_V(A)\epsilon$.
Davies' idea was to replace $A$ by a perturbation $A+E$ with a much smaller $\kappa_V(A+E)$, and compute $f(A+E)$ instead. In \cite[Theorem 2.4]{davies}, he showed that the net error incurred by this scheme for a given $\epsilon>0$ and sufficiently regular $f$ is controlled by:
$$ \kappa_V(A+E)\epsilon + \|E\|,$$
which is the quantity appearing in \eqref{eqn:conj}. The key desirable feature of \eqref{eqn:conj} is the dimension-independent fractional power of $\epsilon$ on the right-hand side, which shows that the total error scales slowly. 

Davies proved his conjecture in the special case of upper triangular Toeplitz matrices, in dimension $n= 3$ with the constant $c_n = 2$, as well as in the general case with the weaker dimension-dependent and nonconstructive bound $(n+1)\epsilon^{2/(n+1)}$. This last result corresponds to \eqref{eqn:oldbound} above.  
He also speculated that a {\em random}
regularizing perturbation $E$ suffices to prove Conjecture \ref{conj:davies},
and presented empirical evidence to that effect.  Our proof of Theorem
\ref{thm:a} below indeed follows this strategy.

\subsection{Gaussian Regularization}
Theorem \ref{thm:a} follows from a probabilistic result concerning complex Gaussian perturbations of a given matrix $A$.
To state our result, we recall two standard notions.

\begin{definition} 
	A {\em complex Ginibre matrix} is an $n\times n$ random matrix $G_n=(g_{ij})$ with i.i.d complex Gaussian entries $g_{ij}\sim N(0,1_\C/n)$, by which we mean $\dE g_{ij}=0$ and $\dE |g_{ij}|^2=1/n$. Equivalently the real and imaginary parts of each $g_{ij}$ are independent $N(0,1/2n)$ random variables. 
\end{definition}

\begin{definition} 
	Let $M\in\C^{n\times n}$ have distinct eigenvalues $\lambda_1,\ldots,\lambda_n$, and spectral expansion
	$$
		M = \sum_{i=1}^n \lambda_i v_i w_i^\ast = VDV^{-1},
	$$
	where the right and left eigenvectors $v_i$ and $w_i^\ast$ are the columns and rows of $V$ and $V^{-1}$, respectively, normalized so that $w_i^*v_i=1$. 
	The \emph{eigenvalue condition number of $\lambda_i$} is defined as:
	$$
	    \kappa(\lambda_i) \triangleq \left\| {v_i w_i^\ast}\right\| =\|v_i\|\|w_i\|.
	$$
\end{definition}
The $\kappa(\lambda_i)$'s are called condition numbers because they determine the sensitivity of the $\lambda_i$ to perturbations of the matrix.
We show that adding a small Ginibre perturbation regularizes the eigenvalue condition numbers of any matrix in the following averaged sense.
\begin{theorem} \label{thm:b} 
	Suppose $A\in\C^{n\times n}$ with $\|A\|\le 1$ and $\delta\in (0,1)$. Let $G_n$ be a complex Ginibre matrix, and let $\lambda_1,\ldots,\lambda_n\in \C$ be the (random) eigenvalues of $A+\delta G_n$. Then for every measurable open set $B\subset \C,$
	$$ 
		\dE \sum_{\lambda_i \in B} \kappa(\lambda_i)^2 \le \frac{n^2}{\pi \delta^2}\vol(B).
	$$
\end{theorem}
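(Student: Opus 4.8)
The plan is to convert $\dE\sum_{\lambda_i\in B}\kappa(\lambda_i)^2$ into a statement about the expected area of the $\e$-pseudospectrum of $A+\delta G_n$ inside $B$, and then to bound that area by integrating a smallest-singular-value tail estimate for shifted Ginibre matrices. Write $s_n(\cdot)$ for the smallest singular value and $S_\e(M)=\{z\in\C : s_n(M-z)<\e\}$ for the $\e$-pseudospectrum. The starting point is that if $M$ has a simple eigenvalue $\lambda_i$ with spectral projector $v_iw_i^\ast$, the expansion $(z-M)^{-1}=\sum_k (z-\lambda_k)^{-1}v_kw_k^\ast$ gives $\|(z-M)^{-1}\|=\kappa(\lambda_i)|z-\lambda_i|^{-1}(1+o(1))$, hence $s_n(M-z)=\kappa(\lambda_i)^{-1}|z-\lambda_i|(1+o(1))$ as $z\to\lambda_i$. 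Consequently, for each $\lambda_i\in B$ and all small enough $\e$, the set $S_\e(M)\cap B$ contains a disk about $\lambda_i$ of radius $\e\kappa(\lambda_i)(1-o(1))$, and these disks are pairwise disjoint. Since $A+\delta G_n$ has simple eigenvalues almost surely (its discriminant is a nonzero polynomial in the Gaussian entries), this yields the almost sure inequality
$$\liminf_{\e\to 0}\frac{1}{\pi\e^2}\vol\!\big(S_\e(A+\delta G_n)\cap B\big)\ \ge\ \sum_{\lambda_i\in B}\kappa(\lambda_i)^2.$$

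Next I would take expectations. Fix a sequence $\e_k\downarrow 0$. By Fatou's lemma and then Tonelli's theorem,
$$\dE\sum_{\lambda_i\in B}\kappa(\lambda_i)^2\ \le\ \liminf_k \frac{1}{\pi\e_k^2}\,\dE\,\vol\!\big(S_{\e_k}(A+\delta G_n)\cap B\big)\ =\ \liminf_k \frac{1}{\pi\e_k^2}\int_B \dP\!\big[s_n\big((A-z\1)+\delta G_n\big)<\e_k\big]\,d^2z.$$
Thus it suffices to prove the pointwise (in $z$) estimate $\dP[s_n(M_0+\delta G_n)<\e]\le n^2\e^2/\delta^2$ for every $M_0\in\C^{n\times n}$ and $\e>0$; applying it with $M_0=A-z\1$ makes each term of the $\liminf$ at most $\frac{1}{\pi\e_k^2}\cdot\frac{n^2\e_k^2}{\delta^2}\vol(B)=\frac{n^2}{\pi\delta^2}\vol(B)$, which is the claim of Theorem \ref{thm:b}. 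Note that no interchange of limit and integral is needed, since the bound on each term of the $\liminf$ is already uniform.

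It remains to prove the singular value estimate. By scale invariance, $\dP[s_n(M_0+\delta G_n)<\e]=\dP[s_n(\delta^{-1}M_0+G_n)<\e/\delta]$, so it is enough to show $\dP[s_n(M_0+G_n)\le t]\le n^2t^2$ for every $M_0$ and $t>0$. For $M_0=0$ this is exact and elementary: $s_n(G_n)^2=\lambda_{\min}(G_n^\ast G_n)$, and for the square complex Ginibre ensemble $\dP[\lambda_{\min}(G_n^\ast G_n)>t]=e^{-n^2 t}$ (an explicit Laguerre-ensemble computation), so $\dP[s_n(G_n)\le t]=1-e^{-n^2t^2}\le n^2t^2$. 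For general $M_0$ I would deduce the same bound from a comparison theorem — a variant of Śniady's theorem — asserting that a deterministic additive shift can only make $s_n$ stochastically larger, reducing to the case $M_0=0$; this is also the ingredient producing the optimal constant in the Sankar--Spielman--Teng conjecture referenced in the introduction.

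I expect the comparison step to be the main obstacle. The naive route — bounding $s_n(N)^{-2}\le\|N^{-1}\|_F^2=\sum_{j=1}^n \dist\big(\text{col}_j(N),\ \mathrm{span}\{\text{col}_k(N):k\neq j\}\big)^{-2}$ and union-bounding over the $n$ columns, each a Gaussian vector whose distance to a fixed subspace dominates in modulus a shifted one-dimensional complex Gaussian — does prove a bound of the \emph{form} $\dP[s_n(M_0+\delta G_n)<\e]\lesssim n^3\e^2/\delta^2$, already enough for a version of Theorem \ref{thm:b} with a worse power of $n$; but it loses a factor of $n$ because $\|\cdot\|_{\mathrm{op}}$ and $\|\cdot\|_F$ can differ by $\sqrt n$, and because the minimum defining $s_n$ is taken over a continuum of directions rather than the $n$ coordinate ones. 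Recovering the sharp $n^2$ therefore seems to require genuinely comparing $M_0+G_n$ to $G_n$ (Śniady's theorem) instead of decoupling into columns; that, together with verifying the routine resolvent and perturbation-theoretic asymptotics underlying Step 1, is where the real work lies.
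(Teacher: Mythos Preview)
Your proposal is correct and follows essentially the same route as the paper: relate $\sum_{\lambda_i\in B}\kappa(\lambda_i)^2$ to $\liminf_{\e\to 0}\e^{-2}\vol(\Lambda_\e\cap B)$ via the local resolvent expansion, apply Fatou and Fubini/Tonelli to reduce to the pointwise tail bound $\dP[\sigma_n(M_0+\delta G_n)<\e]\le n^2\e^2/\delta^2$, and prove the latter by combining Edelman's exact formula for the centered case with \'Sniady's stochastic-domination comparison. You even correctly note that only the $\liminf\ge$ direction of the pseudospectral-area identity is needed (the paper states the full limit but uses only that inequality), and your assessment that the column-distance argument loses a factor of $n$ while \'Sniady recovers the sharp $n^2$ is exactly the point made in the paper's Remark~\ref{rem:real}.
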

\noindent Note that the $\kappa(\lambda_i)$ appearing above are well-defined because $A+\delta G_n$ has distinct eigenvalues with probability one. 

\subsection{Related Work} 
{\em Random Matrix Theory.} There have been numerous studies of the eigenvalue condition numbers $\kappa(\lambda_i)^2$, sometimes called eigenvector \emph{overlaps} in the random matrix theory and mathematical physics literature, for non-Hermitian random matrix models of type $A+\delta G_n$.
In the centered case $A=0$ and $\delta=1$ of a standard complex Ginibre matrix, 
the seminal work of Chalker and Mehlig \cite{chalker1998eigenvector} calculated the large-$n$ limit of
the conditional expectations $$\dE [\kappa(\lambda)^2|
\lambda=z]\underrel{n\rightarrow\infty}{\sim} n(1-|z|^2),$$ whenever $|z|<1$.
Recent works by Bourgade and Dubach \cite{bourgade2018distribution} and Fyodorov \cite{fyodorov2018statistics}
improved on this substantially by giving exact nonasymptotic formulas for the distribution of $\kappa(\lambda)^2$ conditional on the location of the eigenvalue $\lambda$, as well as
concise descriptions of the scaling limits for these formulas.
The paper \cite{benaych2018eigenvectors} proved (in the more general setup of invariant ensembles) that the angles between the right
eigenvectors $(v_i^*v_j)/\|v_i\|\|v_j\|$ have subgaussian tails, which has some bearing on $\kappa_V$ (for instance, a small angle between unit eigenvectors causes $\Vert V^{-1} \Vert$ and therefore $\kappa_V$ to blow up.)

In the non-centered case, Davies and Hager \cite{davies2009perturbations}
showed that if $A$ is a Jordan block and $\delta=n^{-\alpha}$ for some
appropriate $\alpha$, then almost all of the eigenvalues of $A+\delta G_n$ lie
near a circle of radius $\delta^{1/n}$ with probability $1-o_n(1)$.  Basak,
Paquette, and Zeitouni \cite{basak2018spectrum, basak2019regularization} showed
that for a sequence of banded Toeplitz matrices $A_n$ with a finite symbol, the
spectral measures of $A_n+n^{-\alpha}G_n$ converge weakly in probability, as $n\to \infty$, to a
predictable density determined by the symbol. Both of the above results were
recently and substantially improved by Sj\"ostrand and Vogel
\cite{sjoestrand2019general, sjoestrand2019toeplitz} who proved that for any
Toeplitz $A$, almost all of the eigenvalues of $A+n^{-\alpha}G_n$ are close to the
symbol curve of $A$ with exponentially good probability in $n$. Note that none of the results
mentioned in this paragraph explicitly discuss the $\kappa(\lambda_i)$; however, they do deal qualitatively
with related phenomena surrounding spectral instability of non-Hermitian matrices.

The idea of managing spectral instability by adding a random perturbation can
be traced back to the influential papers of Haagerup and Larsen
\cite{haagerup2000brown} and \'Sniady \cite{sniady2002random} (see also
\cite{guionnet2014convergence,noy2014regularization}), who used it to study
convergence of the eigenvalues of certain non-Hermitian random matrices to a limiting Brown
measure, in the context of free probability theory.

There are three notable differences between Theorem \ref{thm:b} and the results mentioned above:
\begin{enumerate}
	\item Our result is much coarser, and only guarantees an upper bound on the $\dE \kappa(\lambda_i)^2$, 
		rather than a precise description of any distribution, limiting or not.
	\item It applies to any $A\in\C^{n\times n}$ and $\delta\in (0,1)$.
	\item It is completely nonasymptotic and does not require $n\rightarrow\infty$ or even sufficiently large $n$.
\end{enumerate}

{\em Numerical Analysis.} In the numerical linear algebra literature, several works have analyzed the condition numbers of Gaussian matrices (notably the seminal results of Demmel \cite{demmel1983numerical} and Edelman \cite{edelman1988eigenvalues}) as well as perturbations of arbitrary matrices by Gaussian matrices (beginning with \cite{sankar2006smoothed}) in the nonasymptotic regime. In contrast, this paper studies the condition numbers of the {\em eigenvectors} of such matrices, rather than of the matrices themselves.

The idea of approximating matrix functions by adding a regularizing perturbation was introduced in \cite{davies} and has since appeared in several works regarding numerical computation of the matrix logarithm, sine, cosine, and related functions \cite{al2013computing,higham2013improved,al2015new,nadukandi2018computing,defez2019efficient}.

\subsection{Techniques and Organization}
The proofs of Theorems \ref{thm:a} and \ref{thm:b} are quite simple and rely on an interplay between various notions of spectral stability.  In addition to $\kappa_V$ and the $\kappa(\lambda_i)$, we
will heavily use the notion of the {\em $\epsilon-$pseudospectrum} of a matrix $M$, defined for $\epsilon>0$ as:
\begin{align}
    \Lambda_\epsilon(M) 
	&\triangleq \left\{z \in \mathbb{C} : z \in \Lambda(M + E) \text{ for some $\|E\| < \epsilon$}\right\} \\
	&= \left\{z \in \mathbb{C} : \|(z{I} - M)^{-1}\| > 1/\epsilon \right\} \label{eqn:ps1}\\
	&= \left\{z \in \mathbb{C} :  \sigma_n(z{I}-M)< \epsilon \right\} \label{eqn:ps2},
\end{align}
where $\Lambda(M)$ denotes the spectrum $M$.
For a proof of the equivalence of these three sets and a comprehensive treatment of pseudospectra, see the beautiful book
by Trefethen and Embree \cite{trefethen2005spectra}.
Note that for a normal matrix, we have 
$$
	\Lambda_\epsilon(M) = \Lambda(M)+ \bigcup_{i}D(\lambda_i, \epsilon),
$$
whereas for a nonnormal matrix such as a Jordan block, $\Lambda_\epsilon$ can be much larger.  Figure 1 illustrates the regularizing effect of a small complex Gaussian perturbation on the pseudospectrum of a nondiagonalizable matrix, which is the key phenomenon underlying our results. 
\begin{figure}
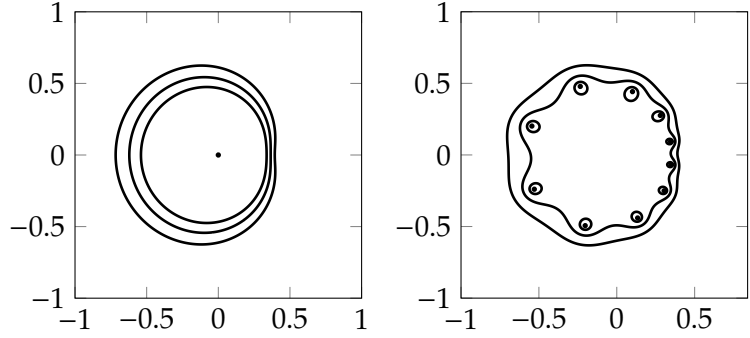
\label{fig:toeplitz}
\input{Toeplitz.tex}
\input{Toeplitz_perturbed.tex}
\centering
\caption{$T$ is a sample of an upper triangular $10\times 10$ Toeplitz matrix with zeros on the diagonal and independent (modulo the Toeplitz structure) standard real Gaussian entries above the diagonal.  
Pictured is the boundary of the $\eps$-pseudospectrum of $T$ (left) and $T+10^{-6} G$ (right) for $\eps= 10^{-5}$, $\eps = 10^{-5.5}$, and $\eps = 10^{-6}$, along with the spectra.  These plots were generated with the MATLAB package EigTool \cite{wright2002eigtool}.} 
\end{figure}


 To analyze this phenomenon, we first collect some tools from random matrix theory in Section \ref{sec:rmt}, along the way proving a conjecture of Sankar, Spielman, and Teng \cite{sankar2006smoothed} regarding the optimal constant in their smoothed analysis of condition numbers of matrices under {\em real} Gaussian perturbations in Section \ref{sec:st}. Section \ref{sec:upper} contains the proofs of our main results, Theorems \ref{thm:a} and \ref{thm:b}.
In Section \ref{sec:lower}, we prove optimality of the $1/\delta$-dependence in Theorem
\ref{thm:a} as discussed above, and show that Theorem \ref{thm:b} is sharp up to a small constant factor. We conclude with a discussion of some open problems in Section \ref{sec:conc}.

\subsubsection*{Notation} We denote the singular values of an $n\times n$
matrix by $\sigma_1(M)\ge \ldots\ge \sigma_n(M)$, its operator and Frobenius
(Hilbert-Schmidt) norms by $\|M\|$ and $\|M\|_F$, and its condition number by
$\kappa(M)\triangleq\sigma_1(M)/\sigma_n(M)$. Open disks in the complex plane will be written as $D(z_0,r)\triangleq \{z\in \C: |z-z_0|<r\}.$
We will often write $G$ for a standard complex Gaussian matrix with $N(0,1_\C)$ entries, and $G_n = n^{-1/2}G$ for a (normalized) Ginibre matrix.

\section{Tools from Random Matrix Theory}\label{sec:rmt}

\subsection{Nonasymptotic Extreme Singular Value Estimates}

Let us record some standard non-asymptotic estimates for the extreme singular values of complex Ginibre matrices. The lower tail behavior of the smallest singular value of a Ginibre matrix was worked out by Edelman in the unnormalized scaling of i.i.d. $N(0,1_\C$) entries \cite[Chapter 5]{edelman1988eigenvalues}, and in our setting it translates to:

\begin{theorem} \label{thm:edelman} 
	 For a complex Ginibre matrix $G_n$, 
	$$\dP[\sigma_n(G_n) < \epsilon] = 1 - e^{-\epsilon^2n^2} \le \epsilon^2 n^2.$$
\end{theorem}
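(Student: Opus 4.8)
The plan is to reduce everything to the exact law of the least eigenvalue of a square complex Wishart matrix. Let $G$ denote the unnormalized Ginibre matrix with i.i.d. $N(0,1_\C)$ entries, so that $G_n = n^{-1/2}G$ and $\sigma_n(G_n)^2 = n^{-1}\lambda_{\min}(W)$, where $W := G^*G$. The density of $G$ on $\C^{n\times n}$ is proportional to $e^{-\Tr(G^*G)}$, and the standard Wishart change of variables (the Laguerre unitary ensemble with parameter $m-n=0$) shows that the eigenvalues $\lambda_1,\ldots,\lambda_n>0$ of $W$ have joint density
$$ p(\lambda_1,\ldots,\lambda_n) = \frac{1}{Z_n}\prod_{i<j}(\lambda_i-\lambda_j)^2\prod_{i=1}^n e^{-\lambda_i}, \qquad \lambda_i>0, $$
where $Z_n$ is the normalizing constant. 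I would either cite this fact (it is classical, and is precisely what Edelman works out in \cite[Chapter 5]{edelman1988eigenvalues}) or include the one-line derivation from the SVD Jacobian.

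Next I would compute $\dP[\lambda_{\min}(W) > t]$ for $t\ge 0$ directly from this density. In eigenvalue coordinates the event $\{\lambda_{\min}(W) > t\}$ is the box $(t,\infty)^n$, so substitute $\lambda_i = t+\mu_i$. The Vandermonde factor is translation-invariant, $\prod_{i<j}(\lambda_i-\lambda_j)^2 = \prod_{i<j}(\mu_i-\mu_j)^2$, while the Gaussian weight factorizes as $\prod_i e^{-\lambda_i} = e^{-nt}\prod_i e^{-\mu_i}$. Hence
$$ \dP[\lambda_{\min}(W) > t] = e^{-nt}\int_{(0,\infty)^n}\frac{1}{Z_n}\prod_{i<j}(\mu_i-\mu_j)^2\prod_{i=1}^n e^{-\mu_i}\,d\mu = e^{-nt}, $$
since the remaining integral is exactly the total mass of $p$, namely $1$. (As a sanity check, $n=1$ recovers $|g|^2\sim\mathrm{Exp}(1)$ for $g\sim N(0,1_\C)$.)

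Finally, translating back through the scaling: $\dP[\sigma_n(G_n)<\epsilon] = \dP[\lambda_{\min}(W) < n\epsilon^2] = 1 - e^{-n\cdot n\epsilon^2} = 1 - e^{-\epsilon^2 n^2}$, and the elementary inequality $1-e^{-x}\le x$ for $x\ge 0$ yields the stated bound $\epsilon^2 n^2$. The only genuine input is the joint eigenvalue density of the square complex Wishart ensemble; once that is available the argument is just a translation of the integration variable, so there is really no hard step — the care required is entirely bookkeeping: tracking the $n^{-1/2}$ normalization and the fact that the relevant Laguerre weight has parameter $0$ in the square case.
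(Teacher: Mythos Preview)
Your argument is correct. The translation trick on the eigenvalue integral of the square complex Wishart ensemble is exactly the clean way to see that $\lambda_{\min}(W)$ is exactly $\mathrm{Exp}(n)$-distributed, and the scaling back to $\sigma_n(G_n)$ is handled correctly.

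For comparison: the paper does not actually prove this statement at all; it simply attributes the result to Edelman \cite[Chapter~5]{edelman1988eigenvalues} and records the formula in the present normalization. So your write-up supplies more than the paper does. The approach you give is essentially the standard one (and is the content of Edelman's computation): the only substantive input is the joint eigenvalue density of the square complex Wishart/Laguerre ensemble with parameter $m-n=0$, after which the translation-invariance of the squared Vandermonde reduces the tail probability to $e^{-nt}$ times the total mass. If you want to tighten the exposition, you could replace the phrase ``I would either cite this fact \ldots\ or include the one-line derivation'' with an actual citation or the Jacobian line, but mathematically nothing is missing.
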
    

We will also require a cruder tail estimate on the largest singular value. We believe the lemma holds with a constant $2$ instead of $2\sqrt{2}$, but did not find a reference to a nonasymptotic result to this effect; since the difference is not very consequential in this context, we reduce to the real case.
\begin{lemma} \label{lem:s1} For a complex Ginibre matrix $G_n$,
	$$ \dP[\sigma_1(G_n)> 2\sqrt{2}+t]\le 2\exp(-nt^2).$$
\end{lemma}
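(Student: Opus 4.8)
The plan is to reduce everything to two standard nonasymptotic facts about \emph{real} Gaussian matrices, exactly as the preceding remark suggests. Write $G = A + iB$, where $A$ and $B$ are independent real $n\times n$ matrices with i.i.d.\ $N(0,1/2)$ entries (this is just the decomposition into real and imaginary parts), and recall $G_n = n^{-1/2}G$. Put $\tilde A = n^{-1/2}A$ and $\tilde B = n^{-1/2}B$, so that $G_n = \tilde A + i\tilde B$ and, by the triangle inequality for the operator norm,
$$\sigma_1(G_n) = \|\tilde A + i\tilde B\| \le \|\tilde A\| + \|\tilde B\|.$$
This costs a factor $\sqrt 2$ relative to the true edge constant $2$, but lands comfortably inside the stated $2\sqrt 2$. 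Each of $\tilde A,\tilde B$ is $(2n)^{-1/2}$ times a standard real Gaussian $n\times n$ matrix, so by Gordon's (Chevet's) inequality --- $\dE\|H\|\le \sqrt m + \sqrt n$ for an $m\times n$ matrix $H$ with i.i.d.\ $N(0,1)$ entries --- we get $\dE\|\tilde A\|\le \sqrt 2$ and likewise for $\tilde B$, hence $\dE\,\sigma_1(G_n)\le 2\sqrt 2$.

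Next I would apply Gaussian concentration. Viewed as a function of the $2n^2$ real coordinates $(A,B)$, the map $(A,B)\mapsto \|\tilde A + i\tilde B\| = n^{-1/2}\|A+iB\|$ is Lipschitz with constant $n^{-1/2}$ in the Euclidean norm, since
$$\bigl|\,\|A_1+iB_1\| - \|A_2+iB_2\|\,\bigr| \le \|(A_1-A_2)+i(B_1-B_2)\| \le \bigl(\|A_1-A_2\|_F^2 + \|B_1-B_2\|_F^2\bigr)^{1/2}.$$
The coordinates are i.i.d.\ $N(0,1/2)$, so the standard concentration inequality $\dP[f(X)\ge \dE f(X)+t]\le \exp(-t^2/(2L^2\sigma^2))$ for an $L$-Lipschitz $f$ and $X\sim N(0,\sigma^2 I)$, taken with $L = n^{-1/2}$ and $\sigma^2 = 1/2$, yields $\dP[\sigma_1(G_n)\ge \dE\,\sigma_1(G_n)+t]\le \exp(-nt^2)$. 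Combining with the mean bound,
$$\dP[\sigma_1(G_n) > 2\sqrt 2 + t] \le \dP[\sigma_1(G_n) > \dE\,\sigma_1(G_n) + t] \le \exp(-nt^2) \le 2\exp(-nt^2),$$
which is in fact slightly stronger than claimed (the harmless factor $2$ absorbs any slack if one instead argues via $\|\tilde A\|$ and $\|\tilde B\|$ separately).

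There is no real obstacle here: the only thing requiring care is the bookkeeping of the two Gaussian scalings --- the $N(0,1/2)$ entries and the overall $n^{-1/2}$ normalization --- through both the Chevet mean bound and the Lipschitz constant in the concentration step. The single non-elementary input is Gordon's/Chevet's nonasymptotic bound $\dE\|H\|\le \sqrt m+\sqrt n$, which I would cite as a black box (e.g.\ via the Slepian--Gordon comparison inequalities); everything else is the triangle inequality together with classical Gaussian concentration for Lipschitz functions. A secondary point worth a sentence is why we do not simply invoke a complex version of Chevet directly: the authors note they could not locate a clean nonasymptotic reference, so the real reduction above is the deliberate workaround, at the modest price of the constant $2\sqrt 2$ in place of $2$.
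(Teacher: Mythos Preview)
Your proof is correct and follows essentially the same route as the paper's: decompose $G_n$ into independent real and imaginary parts, bound the mean via the triangle inequality and the real Gaussian norm estimate $\dE\|H\|\le 2\sqrt n$ (the paper cites Davidson--Szarek, you cite Gordon/Chevet---these are the same bound), and then apply Gaussian Lipschitz concentration to $\sigma_1(G_n)$ viewed as a function of the $2n^2$ real coordinates. Your bookkeeping of the scalings is accurate, and you even obtain the one-sided bound $\exp(-nt^2)$ without the extraneous factor of $2$.
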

\begin{proof}
	We can write $G_n = \frac{1}{\sqrt{2}}(X+iY)$ where $X$ and $Y$ are independent with i.i.d. {\em real} $N(0,1/n)$ entries.
	It is well-known (e.g. \cite[Theorem II.11]{davidson2001local}) that:
	$$\dE \sigma_1(G_n) \le \frac{2}{\sqrt{2}} \dE \|X\| \le 2\sqrt{2}.$$
	Lipschitz concentration of functions of real Gaussian random variables yields the result.
\end{proof}

\subsection{\'Sniady's Comparison Theorem}
To bound the least singular value of noncentered Gaussian matrices, we will lean on a remarkable theorem of \'Sniady \cite{sniady2002random}.
\begin{theorem}[\'Sniady]  \label{thm:sniady}
	Let $A_1$ and $A_2$ be $n \times n$ complex matrices such that $\sigma_i(A_1) \le \sigma_i(A_2)$ for all $1 \le i \le n$.  Assume further that $\sigma_i(A_1) \ne \sigma_j(A_1)$ and $\sigma_i(A_2) \ne \sigma_j(A_2)$ for all $i \ne j$.  Then for every $t \ge 0$, there exists a joint distribution on pairs of $n \times n$ complex matrices $(G_1, G_2)$ such that 
	\begin{enumerate}[(i)]
    	\item the marginals $G_1$ and $G_2$ are distributed as (normalized) complex Ginibre matrices $G_n$, and
    	\item almost surely $\sigma_i(A_1 + \sqrt{t} G_1) \le \sigma_i(A_2 + \sqrt{t} G_2)$ for every $i$.
 	\end{enumerate} 
\end{theorem}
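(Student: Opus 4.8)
The plan is to realise the singular values of $A_k+\sqrt t\,G_k$ as the time-$t$ marginal of an \emph{autonomous} diffusion on the chamber $\{s_1>s_2>\cdots>s_n\ge 0\}$, to couple two copies of this diffusion by feeding them a common driving Brownian motion, and finally to lift the coupled trajectories back to a pair of matrix-valued processes whose increments are complex Ginibre.

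\emph{Step 1: the singular-value diffusion.} Let $B_t$ be a complex matrix Brownian motion normalised so that at each fixed time $B_t\stackrel{d}{=}\sqrt t\,G_n$ (each entry an independent complex Brownian motion with $\mathbb E|(B_t)_{jk}|^2=t/n$), and set $M_t=A+B_t$. Itô's formula applied to the Hermitian process $W_t=M_tM_t^*$ shows it is a Bru-type Wishart process; as long as its eigenvalues $\lambda_1(t)>\cdots>\lambda_n(t)$ — the squared singular values of $M_t$ — are distinct, they satisfy a closed system
\[
d\lambda_i = 2\sqrt{\lambda_i}\,d\beta_i + \Big(c + \sum_{j\ne i}\frac{\lambda_i+\lambda_j}{\lambda_i-\lambda_j}\Big)\,dt ,
\]
where $c$ is fixed by the normalisation and $\beta_1,\dots,\beta_n$ are independent standard real Brownian motions obtained by projecting the increments of $B_t$ onto the current singular directions. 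Passing to $s_i=\sqrt{\lambda_i}$ gives $ds_i=d\beta_i+b_i(s)\,dt$ with a Coulomb-type drift $b_i(s)=\tfrac{1}{2\sqrt{\lambda_i}}\big(c-1+\sum_{j\ne i}\frac{\lambda_i+\lambda_j}{\lambda_i-\lambda_j}\big)$ depending on $s$ alone; in particular the singular-value evolution is Markov in $s$. Two structural facts are used: (a) the strong repulsion keeps the $s_i$ pairwise distinct for all $t>0$ almost surely — here the \emph{complex} (rather than real) normalisation is precisely what yields the $\prod_{i<j}(\lambda_i-\lambda_j)^2$ repulsion needed for non-collision; and (b) the full matrix diffusion admits a skew-product decomposition into this radial (singular-value) part together with an "angular" singular-vector part driven by independent auxiliary noise.

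\emph{Step 2: comparison.} Launch the SDE from the two ordered initial conditions $s^{(1)}(0)=(\sigma_i(A_1))_i\le s^{(2)}(0)=(\sigma_i(A_2))_i$ and drive both copies with the \emph{same} Brownian motions $\beta_1,\dots,\beta_n$. Then $\delta_i(t)=s^{(2)}_i(t)-s^{(1)}_i(t)$ has no martingale part and evolves by the pure drift $\dot\delta_i=b_i(s^{(2)})-b_i(s^{(1)})$. A direct computation shows $b_i$ is nondecreasing in each $\lambda_j$, $j\ne i$ (indeed $\partial_{\lambda_j}\frac{\lambda_i+\lambda_j}{\lambda_i-\lambda_j}=\frac{2\lambda_i}{(\lambda_i-\lambda_j)^2}>0$), and since both copies keep their coordinates ordered and distinct, moving $s^{(1)}_j$ to $s^{(2)}_j$ never crosses the pole at $s_j=s_i$ at a time when $\delta_i=0$. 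Hence the system is quasimonotone (Kamke–Müller): whenever some $\delta_i$ touches $0$ with all $\delta_j\ge 0$, the drift of $\delta_i$ is $\ge 0$, so $\delta_i$ cannot go negative; a bootstrap over $i$ together with a Grönwall estimate then gives $s^{(1)}_i(t)\le s^{(2)}_i(t)$ for every $i$ and every $t$, almost surely.

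\emph{Step 3: lifting back.} Fix $t\ge 0$. Sample the two singular-value paths jointly by the coupling of Step 2, and conditionally on them sample the angular parts independently according to the skew-product kernel of Step 1; this reconstitutes matrix processes $M^{(k)}_s=A_k+B^{(k)}_s$ with each $B^{(k)}$ a complex matrix Brownian motion of the prescribed variance and with $\sigma_i(M^{(k)}_t)=s^{(k)}_i(t)$. Setting $G_k:=B^{(k)}_t/\sqrt t$ makes the marginals distributed as $G_n$, which is (i), while (ii) is precisely $s^{(1)}_i(t)\le s^{(2)}_i(t)$. (Distinctness of the $\sigma_i(A_k)$ is used to launch the diffusion from the interior of the chamber; for $t>0$ distinctness propagates by fact (a).) I expect the main obstacle to be making Step 2 fully rigorous: one must simultaneously control the times at which a singular value of one copy approaches a singular value of the \emph{other} copy (where $\delta_i$ can vanish) and the times at which two singular values \emph{within} one copy approach each other (where $b_i$ is singular and the Grönwall factor $\partial_{\lambda_i}b_i$ is not obviously integrable), interlacing the quasimonotonicity argument with the non-collision estimates; a secondary technical point is justifying the skew-product reconstruction of Step 3 at the level of path measures.
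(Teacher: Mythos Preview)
Your proposal is correct and follows essentially the same route as the paper's sketch: derive the autonomous singular-value SDE (with constant diffusion coefficient for $s_i$, so that the difference of two coupled solutions has no martingale part), couple two copies via a common driving Brownian motion and invoke quasimonotonicity of the drift to propagate the ordering, then reconstruct the matrix marginals by conditioning on the singular values. The paper is somewhat terser about the comparison step (it just says the differences are $C^1$ and one ``takes derivatives''), whereas you explicitly name the Kamke--M\"uller structure and correctly flag the two localisation issues (inter-copy collisions versus intra-copy collisions) as the genuine technical content; these are indeed the points requiring care, and the paper defers them to a citation for strong existence/uniqueness and non-collision.
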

\noindent We will briefly sketch the proof of this theorem for the reader's benefit, since it is quite beautiful and we will need to perform a slight modification to prove the conjecture of Sankar-Spielman-Teng in the next subsection.
\begin{proof}[Sketch of proof]
	The key insight of the proof is that it is possible to couple the distributions of $G_1$ and $G_2$ through their singular values. To do so, one first derives a stochastic differential equation satisfied by the singular values $s_1,...,s_n$ of a matrix Brownian motion (i.e., a matrix whose entries are independent complex Brownian motions):
	\begin{equation} \label{sde} 
		ds_i = \frac{1}{\sqrt{2n}}dB_i + \frac{dt}{2s_i} \left(1 - \frac{1}{2n} + \sum_{j \ne i} \frac{s_i^2 + s_j^2}{n(s_i^2 - s_j^2)} \right), 
	\end{equation}
	where the $B_i$ are independent standard real Brownian motions.
	Next, one uses a single $n$-tuple of real Brownian motions $B_1,...,B_n$ to drive two processes $(s_1^{(1)}, \dots, s_n^{(1)})$ and $(s_1^{(2)}, \dots, s_n^{(2)})$ according to (\ref{sde}), with initial conditions $s_i^{(1)}(0) = \sigma_i(A_1)$ and $s_i^{(2)}(0) = \sigma_i(A_2)$ for all $i$. (To do this rigorously, one needs existence and uniqueness of strong solutions to the above SDE; this is shown in \cite{konig2001eigenvalues} under the hypothesis $s_i(0) \ne s_j(0)$ for all $i \ne j$.)

	Things have been arranged so that the joint distribution of $(s_1^{(j)}, \dots, s_n^{(j)})$ at time $t$ matches the joint distribution of the singular values of $A_j + \sqrt{t} G_j$ for each $j = 1,2$. One can then sample unitaries $U_j$ and $V_j$ from the distribution arising from the singular value decomposition $A_j+\sqrt{t}G_j = U_jD_jV_j^*$, conditioned on $D_j = \operatorname{diag}(s_1^{(j)}, \dots, s_n^{(j)})$. Thus each $G_j$ is separately Ginibre-distributed. However, $A_1 + \sqrt{t}G_1$ and $A_2 + \sqrt{t}G_2$ are coupled through the shared randomness driving the evolution of their singular values. In particular, since the same $B_i$ were used for both processes, from (\ref{sde}) one can verify that the $n$ differences $s_i^{(2)} - s_i^{(1)}$ are $C^1$ in $t$.  By taking derivatives, one can then show the desired monotonicity property: if $s_i^{(2)} - s_i^{(1)} \ge 0$ holds for all $i$ at $t=0$, it must hold for all $t \ge 0$.
\end{proof}

\subsection{Sankar-Spielman-Teng Conjecture}\label{sec:st}
The proof technique of \'Sniady can be adapted to prove a counterpart of Theorem \ref{thm:sniady} for {\em real} Ginibre perturbations (by this we mean matrices with i.i.d. real $N(0,1/n)$ entries).  Because a rigorous proof requires some stochastic analysis, we defer the proof and discussion of the following theorem to the appendix.
\begin{theorem}\label{thm:sniadyreal}
	Let $A_1$ and $A_2$ be $n \times n$ real matrices such that $\sigma_i(A_1) \le \sigma_i(A_2)$ for all $1 \le i \le n$.  Assume further that $\sigma_i(A_1) \ne \sigma_j(A_1)$ and $\sigma_i(A_2) \ne \sigma_j(A_2)$ for all $i \ne j$.  Then for every $t \ge 0$, there exists a joint distribution on pairs of real  $n \times n$ matrices $(G_1, G_2)$ such that 
	\begin{enumerate}[(i)]
    	\item the marginals $G_1$ and $G_2$ are distributed as real Ginibre matrices (with i.i.d. $N(0,1/n)$ entries), and
    	\item almost surely $\sigma_i(A_1 + \sqrt{t} G_1) \le \sigma_i(A_2 + \sqrt{t} G_2)$ for every $i$.
 	\end{enumerate} 
\end{theorem}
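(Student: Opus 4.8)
The plan is to run \'Sniady's coupling argument from the proof of Theorem~\ref{thm:sniady}, with the complex Ginibre singular‑value dynamics \eqref{sde} replaced by their real ($\beta=1$) counterpart. Four ingredients are needed: (1) the stochastic differential equation satisfied by the singular values $s_1>\cdots>s_n\ge 0$ of a \emph{real} matrix Brownian motion, i.e.\ a matrix whose entries are independent real Brownian motions, normalized so that the matrix at time $t$ has the law of $\sqrt t$ times a real Ginibre matrix; (2) existence and uniqueness of a strong solution to this SDE started from prescribed distinct initial values; (3) a coupling of two solutions, with initial data $\sigma_i(A_1)\le\sigma_i(A_2)$, driven by the \emph{same} real Brownian motions, together with the statement that the componentwise ordering of the singular values is preserved for all $t\ge 0$; and (4) a reconstruction step turning the coupled singular‑value trajectories into a coupled pair of matrices $A_j+\sqrt t\,G_j$ with each $G_j$ a real Ginibre matrix.

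For (1), the second‑order perturbation computation that produces \eqref{sde} goes through for real matrices with $\beta=1$ in place of $\beta=2$; it is carried out most cleanly through the symmetric dilation $\bigl(\begin{smallmatrix}0&M\\ M^{T}&0\end{smallmatrix}\bigr)$, whose eigenvalues are $\pm s_1,\dots,\pm s_n$. Two features of the resulting SDE are what everything else rests on: the martingale part of $ds_i$ is $\tfrac1{\sqrt{2n}}\,dB_i$ with a \emph{constant} (state‑independent) coefficient, and the drift of $s_i$ is strictly increasing in each of the other coordinates $s_j$, $j\ne i$, this being monotonicity of the Coulomb‑type repulsion term exactly as in the complex case. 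For (2), one invokes the real‑variable analogue of the results of \cite{konig2001eigenvalues} used to justify \eqref{sde}; the hypotheses $\sigma_i(A_k)\ne\sigma_j(A_k)$ enter precisely here. For (3), because the diffusion coefficient is constant, driving the two solutions $s^{(1)},s^{(2)}$ by common Brownian motions cancels the martingale parts, so each difference $\delta_i(t):=s_i^{(2)}(t)-s_i^{(1)}(t)$ is $C^1$ in $t$ on any interval where the singular values stay distinct, with $\dot\delta_i$ equal to the difference of the two drifts at coordinate $i$. A first‑exit argument then gives $\delta_i(t)\ge 0$ for all $t$: at any time and index where $\delta_{i_0}=0$ while $\delta_j\ge 0$ for $j\ne i_0$, monotonicity of the drift in the off‑diagonal coordinates forces $\dot\delta_{i_0}\ge 0$, as in the sketch after Theorem~\ref{thm:sniady}. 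Finally, for (4), given the coupled trajectories at time $t$ one samples orthogonal $U_j,V_j\in O(n)$ from the conditional law of the left and right singular frames of $A_j+\sqrt t\,G_j^{\mathrm{Gin}}$ given its singular values (conditionally independently over $j=1,2$), and sets $G_j:=t^{-1/2}\bigl(U_j\,\mathrm{diag}(s^{(j)}(t))\,V_j^{T}-A_j\bigr)$. By the $O(n)\times O(n)$‑invariance of the real Ginibre ensemble each $G_j$ is real Ginibre, and $\sigma_i(A_j+\sqrt t\,G_j)=s_i^{(j)}(t)$ by construction, which is exactly (i) and (ii).

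The step I expect to be the main obstacle — and the reason a rigorous treatment requires genuine stochastic analysis — is making sense of (1)--(3) \emph{globally in time} in the real case. Unlike the complex case, where repulsion from the origin keeps the smallest singular value strictly positive, for real square matrices the smallest singular value behaves at the hard edge like a Bessel process of dimension $1$ (equivalently, $s_n^2$ is a squared Bessel process of dimension $1$), so it does reach $0$, where the drift in the $s$‑coordinates degenerates. One must therefore either work with a reflected/stopped version of the SDE together with a comparison principle for reflected diffusions, or pass to the symmetrized $2n$‑eigenvalue picture; in either case one has to verify that the pathwise ordering is not destroyed at the (Lebesgue‑null but possibly nonempty) set of times at which $s_n=0$ or at which two singular values collide — the borderline $\beta=1$ non‑collision estimates being exactly the delicate point. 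This is the content deferred to the appendix.
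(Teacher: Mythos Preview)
Your plan follows \'Sniady's complex-case outline and correctly isolates the new obstacle: in the real case the smallest singular value reaches $0$, so the $s$-coordinate SDE is not globally well-posed and the $C^1$-difference argument breaks down there. Your It\^o computation showing that the diffusion coefficient of $s_i$ is state-independent is correct, and quasi-monotonicity of the drift is indeed the right structural condition. However, neither of your suggested repairs (a reflected SDE with a local-time comparison principle, or the symmetrized $2n$-eigenvalue picture) is what the paper does, and both would require substantial additional analysis that you have not supplied.

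The paper instead passes to the \emph{squared} singular values $\lambda_i=s_i^2$, governed by Bru's Wishart eigenvalue SDE
\[
d\lambda_i \;=\; \frac{2\sqrt{\lambda_i}}{n}\,dB_i \;+\; \Bigl(1+\sum_{j\ne i}\frac{\lambda_i+\lambda_j}{\lambda_i-\lambda_j}\Bigr)\,dt.
\]
In these coordinates the drift is finite at $\lambda_n=0$, so the hard edge ceases to be a singular boundary for the drift; the price is that the diffusion coefficient is now only H\"older-$\tfrac12$ rather than constant, so one can no longer cancel martingale parts by subtraction. But this is precisely the setting of the multidimensional comparison theorem of Gei\ss--Manthey, which needs only quasi-monotone drift, a H\"older-$\tfrac12$ diffusion modulus, and strong well-posedness (supplied here by Graczyk--Ma\l{}ecki rather than the real analogue of K\"onig--O'Connell you invoke). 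Two short localization arguments dispose of the remaining technicalities: stopping before collisions to make the drift Lipschitz on compacts, and extending $\sqrt{\lambda_i}$ to $\sqrt{|\lambda_i|}$ on all of $\mathbb{R}^n$ and then comparing with the zero solution to confine the process to $\{\lambda_i\ge 0\}$. Your reconstruction step (4) via bi-orthogonal invariance of the real Ginibre law is correct as written.
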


\noindent This resolves Conjecture 1 in \cite{sankar2006smoothed}, which we restate below as a proposition:
\begin{proposition} \label{prop:sst}
	Let $G$ be an $n \times n$ matrix with i.i.d. real $N(0,1)$ entries, and $A$ be any $n \times n$ matrix with real entries. Then 
	\[ 
		\P[\sigma_n(A + G) < \eps] \le \eps \sqrt{n}.
	\]
\end{proposition}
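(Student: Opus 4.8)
The plan is to use the real Śniady comparison theorem (Theorem~\ref{thm:sniadyreal}) to reduce to the centered case $A=0$, and then to quote Edelman's exact description of the least singular value of a real Gaussian matrix. Throughout, $G$ denotes an $n\times n$ matrix of i.i.d.\ real $N(0,1)$ entries, so that $G\stackrel{d}{=}\sqrt{n}\,G_n$ for a normalized real Ginibre matrix $G_n$.

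First I would remove the nondegeneracy hypotheses that Theorem~\ref{thm:sniadyreal} requires. Since $\sigma_n(\cdot)$ is $1$-Lipschitz on $\R^{n\times n}$ and $G$ has a density, the exceptional set $\{G:\sigma_n(M+G)=\eps\}$ is cut out by the (not identically zero) polynomial $\det\!\big((M+G)^{T}(M+G)-\eps^2 I\big)$ in the entries of $G$, hence is Lebesgue-null; by bounded convergence, $M\mapsto \P[\sigma_n(M+G)<\eps]$ is therefore continuous. Consequently it suffices to prove the bound when $A$ has $n$ distinct, strictly positive singular values. For such an $A$ and any $\eta\in\big(0,\sigma_n(A)/n\big)$, the diagonal matrix $A_\eta:=\operatorname{diag}(\eta,2\eta,\ldots,n\eta)$ has distinct singular values and satisfies $\sigma_i(A_\eta)\le\sigma_i(A)$ for every $i$.

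Next I would apply Theorem~\ref{thm:sniadyreal} with $A_1=A_\eta$, $A_2=A$, and $t=n$, producing a coupling of real normalized Ginibre matrices $(G_1,G_2)$ with $\sigma_i(A_\eta+\sqrt{n}\,G_1)\le\sigma_i(A+\sqrt{n}\,G_2)$ almost surely; taking $i=n$ and using that each $\sqrt{n}\,G_j$ has i.i.d.\ real $N(0,1)$ entries yields
\[
  \P[\sigma_n(A+G)<\eps]\;\le\;\P[\sigma_n(A_\eta+G)<\eps].
\]
Letting $\eta\downarrow 0$ and invoking the continuity statement above (with $M=A_\eta\to 0$) gives $\P[\sigma_n(A+G)<\eps]\le\P[\sigma_n(G)<\eps]$. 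It then remains only to establish the centered inequality $\P[\sigma_n(G)<\eps]\le\eps\sqrt{n}$, which I would read off from Edelman's closed-form expression for the distribution of the smallest singular value of a real standard Gaussian matrix \cite{edelman1988eigenvalues}, checking that its CDF stays below the line $t\mapsto t\sqrt{n}$ for every $n$.

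The main obstacle is precisely this last step: converting Edelman's somewhat intricate exact density into the clean, \emph{nonasymptotic} bound $\P[\sigma_n(G)<\eps]\le\eps\sqrt{n}$ valid for all $n$ (not merely in the $n\to\infty$ limit), and pinning down the constant $1$ conjectured in \cite{sankar2006smoothed}. Everything else — the two limiting arguments and the bookkeeping of the scaling $t=n$ — is routine once Theorem~\ref{thm:sniadyreal} is available.
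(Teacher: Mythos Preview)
Your proposal is correct and follows essentially the same route as the paper: reduce to the centered case via the real \'Sniady comparison (Theorem~\ref{thm:sniadyreal}), handle the distinct-singular-value hypothesis by approximation and continuity of $\sigma_n(\cdot)$, and then invoke Edelman for $A=0$. Your version is more explicit about the approximation (constructing $A_\eta$ and justifying continuity of $M\mapsto\P[\sigma_n(M+G)<\eps]$), whereas the paper simply says ``approach $0$ and $A$ by matrices satisfying this hypothesis \ldots\ and take limits.'' The only point of divergence is your framing of the centered bound $\P[\sigma_n(G)<\eps]\le\eps\sqrt n$ as the ``main obstacle'': the paper treats this as a known result of Edelman \cite{edelman1988eigenvalues} and does not rederive it, so within the paper's conventions there is nothing left to do once the comparison step is in place.
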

\begin{proof}
The case $A=0$ is a result of Edelman \cite{edelman1988eigenvalues}.  The proposition for general $A$ would then follow from Theorem \ref{thm:sniadyreal} with $A_1 = 0$ and $A_2 = A$ if not for the hypothesis $\sigma_i(A_1) \ne \sigma_j(A_1)$ and $\sigma_i(A_2) \ne \sigma_j(A_2)$ for all $i \ne j$.  So we approach 0 and $A$ by matrices satisfying this hypothesis, apply Theorem \ref{thm:sniadyreal}, and take limits, using the continuous mapping theorem and continuity of $\sigma_n(\cdot)$.
\end{proof}

\section{Proof of Theorems \ref{thm:a} and \ref{thm:b}}\label{sec:upper}
We begin with a lemma relating the eigenvector and eigenvalue condition numbers. For related results, including an extension of this lemma to the more general context of block diagonalization, see the thesis of Demmel \cite[Equation 3.6]{demmel1983numerical}.
\begin{lemma} \label{lem:condineq}
Let $M$ be an $n \times n$ matrix with distinct eigenvalues, and let $V$ be the matrix whose columns are the eigenvectors of $M$ normalized to have unit norm.  Then
\[\kappa(V)\le \sqrt{n \sum_{i=1}^n \kappa(\lambda_i)^2}.\] 
\end{lemma}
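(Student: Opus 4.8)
The plan is to express $V$ and $V^{-1}$ explicitly in terms of the unit right eigenvectors $v_i$ and the corresponding rows of the inverse, and then bound their operator norms by Frobenius norms. Write $M = VDV^{-1}$ where the columns of $V$ are $v_1,\dots,v_n$, the unit right eigenvectors, and let $w_i^\ast$ denote the $i$th row of $V^{-1}$. Since $v_i$ has unit norm, the normalization $w_i^\ast v_i = 1$ forces the pairing $v_i, w_i$ used in the definition of $\kappa(\lambda_i)$ to coincide with the columns/rows of this particular $V$; hence $\kappa(\lambda_i) = \|v_i\|\|w_i\| = \|w_i\|$ because $\|v_i\| = 1$.

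First I would bound $\|V\|$: since the columns of $V$ are unit vectors, $\|V\|\le \|V\|_F = \sqrt{\sum_i \|v_i\|^2} = \sqrt{n}$. Next I would bound $\|V^{-1}\|$: the rows of $V^{-1}$ are the $w_i^\ast$, so $\|V^{-1}\| \le \|V^{-1}\|_F = \sqrt{\sum_i \|w_i\|^2} = \sqrt{\sum_i \kappa(\lambda_i)^2}$, using the identification above. Multiplying the two estimates gives
\[
\kappa(V) = \|V\|\,\|V^{-1}\| \le \sqrt{n}\cdot\sqrt{\sum_{i=1}^n \kappa(\lambda_i)^2} = \sqrt{n\sum_{i=1}^n \kappa(\lambda_i)^2},
\]
which is exactly the claimed inequality.

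The only genuinely delicate point — and the one I would be careful to spell out — is the bookkeeping in the normalization: one must check that the columns of the unit-normalized eigenvector matrix $V$ and the rows of its inverse realize the same bi-orthogonal system $(v_i, w_i)$ with $w_i^\ast v_i = 1$ appearing in the definition of $\kappa(\lambda_i)$, so that $\|w_i\| = \kappa(\lambda_i)$ when $\|v_i\|=1$. This is immediate once one observes that rescaling the $i$th column of $V$ by a nonzero scalar rescales the $i$th row of $V^{-1}$ by the reciprocal, leaving each rank-one spectral projector $v_i w_i^\ast$ — and hence $\kappa(\lambda_i) = \|v_i w_i^\ast\|$ — invariant; choosing the scaling that makes $\|v_i\| = 1$ then makes $\|w_i\| = \|v_i\|\|w_i\| = \kappa(\lambda_i)$. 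Everything else is the trivial operator-norm $\le$ Frobenius-norm bound, so there is no real obstacle; the lemma is essentially a one-line computation once the normalization is pinned down.
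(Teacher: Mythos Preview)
Your proposal is correct and follows essentially the same approach as the paper: bound $\|V\|$ and $\|V^{-1}\|$ by their Frobenius norms, using $\|V\|_F = \sqrt{n}$ and $\|V^{-1}\|_F^2 = \sum_i \|w_i\|^2 = \sum_i \kappa(\lambda_i)^2$. The paper's proof is the same one-line computation, just stated more tersely without the explicit normalization bookkeeping you spell out.
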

\begin{proof}
Note that the left eigenvectors $w_i$ are the rows of $V^{-1}$.  Then $\Vert V \Vert_F^2 = n$ and $\Vert V^{-1} \Vert_F^2 = \sum_{i=1}^n \Vert w_i\Vert^2 = \sum_{i=1}^n \kappa(\lambda_i)^2$, so 
\[\kappa(V)= \Vert V \Vert \Vert V^{-1} \Vert \le \Vert V \Vert_F \Vert V^{-1} \Vert_F =\sqrt{n \sum_{i=1}^n \kappa(\lambda_i)^2}.\]
\end{proof}

We can now prove the main theorem.
\begin{proof}[Proof of Theorem \ref{thm:a} given Theorem \ref{thm:b}] 
Let $\lambda_1,\ldots,\lambda_n$ be the eigenvalues of the random matrix $A+\delta G_n$, and $t > 2 \sqrt{2}$ and  $s > 1$ be parameters to be optimized later.  Davies' original bound (\ref{eqn:oldbound}) implies our bound for $n \le 3$, so assume $n \ge 4$.
 Then Lemma \ref{lem:s1} tells us that 
	\begin{equation}\dP[\Vert \delta G_n \Vert \ge t\delta] \le 2 e^{-4 (t - 2 \sqrt{2})^2}.\end{equation}
Letting $B=D(0,\|A\|+t\delta)$, we have
	\begin{equation} \label{eqn:gsmall}
		\dP \left[\sum_{\lambda_i\in B}\kappa(\lambda_i)^2 \neq \sum_{i\le n} \kappa(\lambda_i)^2\right] \le \dP[\|\delta G_n\| \ge t\delta]\le 2 e^{-4 (t - 2 \sqrt{2})^2}
	\end{equation}
	since $\max_{i\le n} |\lambda_i|\le \|A\|+\|\delta G_n\|.$
	On the other hand, by Theorem \ref{thm:b} applied to $B$ and Markov's inequality:
	\begin{equation}\label{eqn:bgood}
		\dP\left[\sum_{\lambda_i\in B} \kappa(\lambda_i)^2 \ge s \frac{n^2\vol(B)}{\delta^2\pi}\right]\le  \frac{1}{s}.
	\end{equation}
By the union bound, if we choose $s$ and $t$ such that
\begin{equation}\label{eqn:st} 
2 e^{-4 (t - 2 \sqrt{2})^2} + \frac{1}{s} < 1
\end{equation} then there exists a choice of $G_n$ such that neither of the events \eqref{eqn:gsmall}, \eqref{eqn:bgood} occurs.
	Letting $E=\delta G_n$ for this choice, we have
	$$ \sum_{i=1}^n \kappa(\lambda_i)^2 = \sum_{i\in B} \kappa(\lambda_i)^2 \le s\frac{n^2\vol(B)}{\pi\delta^2}.$$
	Taking a square root and applying Lemma \ref{lem:condineq}, we have
	$$\kappa_V(A+E) \le \frac{\sqrt{s}n^{3/2}}{\delta}(\|A\|+t\delta) \le \frac{\sqrt{s}n^{3/2}\|A\|}{\delta} + t \sqrt{s}n^{3/2}.$$
	Because $\|E\|\le t\delta$ and not $\delta$, replacing $\delta$ by $\delta/t$ yields the bound
	$$ \kappa_V(A+E)\le  \frac{t \sqrt{s} n^{3/2}\|A\|}{\delta}+t \sqrt{s} n^{3/2}.$$
	
	To get the best bound, we must minimize $t \sqrt{s}$ subject to the constraints (\ref{eqn:st}), $t > 2 \sqrt{2}$ and $s > 1$.  Solving for $s$ this becomes a univariate optimization problem, and one can check numerically that the optimum is achieved at $t \approx 3.7487$ and $t\sqrt{s} \approx 3.8822 < 4$, as advertised.
 \end{proof}

We begin the proof of Theorem \ref{thm:b} by relating the eigenvalue condition numbers of a matrix to the rate at which its pseudospectrum $\Lambda_\epsilon$ shrinks as a function of the
parameter $\epsilon>0$.  The following proposition is not new; the proof
essentially appears for example in Section 3.6 of
\cite{bourgade2018distribution}, but we include it for completeness since it is
critical to our argument.
\begin{lemma}[Limiting Area of the Pseudospectrum] \label{prop:limit}
    Let $M$ be an $n \times n$ matrix with $n$ distinct eigenvalues $\lambda_1, \dots, \lambda_n$ and let $B\subset \C$ be a measurable open set.  Then
    \[ \lim_{\eps \to 0} \frac{\vol(\Lambda_\eps(M)\cap B)}{\eps^2} = \pi \sum_{\lambda_i\in B}^n \kappa(\lambda_i)^2.\] 
\end{lemma}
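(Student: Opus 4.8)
The plan is to work locally near a single eigenvalue and then sum. The key identity is that for $z$ near an eigenvalue $\lambda_i$ (and away from the others), the resolvent norm $\|(zI-M)^{-1}\|$ is dominated by the rank-one term $\frac{v_iw_i^*}{z-\lambda_i}$ coming from the spectral expansion $(zI-M)^{-1} = \sum_j \frac{v_jw_j^*}{z-\lambda_j}$. More precisely, writing $R(z) = (zI-M)^{-1}$, I would show that as $z\to\lambda_i$,
\[
    \|R(z)\| = \frac{\kappa(\lambda_i)}{|z-\lambda_i|}\bigl(1 + O(|z-\lambda_i|)\bigr),
\]
where the error term absorbs all contributions from the other poles and the off-diagonal structure; here one uses that $\|v_iw_i^*\| = \|v_i\|\|w_i\| = \kappa(\lambda_i)$ by the normalization $w_i^*v_i=1$. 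Consequently, for small $\eps$, the condition $\|R(z)\| > 1/\eps$ near $\lambda_i$ is equivalent, up to lower-order corrections, to $|z-\lambda_i| < \eps\kappa(\lambda_i)$, i.e. the component of $\Lambda_\eps(M)$ around $\lambda_i$ is asymptotically a disk of radius $\eps\kappa(\lambda_i)$, which has area $\pi\eps^2\kappa(\lambda_i)^2$.

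The steps, in order: (1) Fix $r>0$ smaller than half the minimal gap between distinct eigenvalues; then for $\eps$ sufficiently small, $\Lambda_\eps(M)$ is contained in $\bigcup_i D(\lambda_i,r)$ and splits as a disjoint union of connected pieces $\Lambda_\eps^{(i)}$, one around each $\lambda_i$, by the characterization \eqref{eqn:ps1} and the fact that $\|R(z)\|$ stays bounded outside these disks. (2) On $D(\lambda_i,r)$, expand $R(z) = \frac{v_iw_i^*}{z-\lambda_i} + H_i(z)$ where $H_i(z) = \sum_{j\ne i}\frac{v_jw_j^*}{z-\lambda_j}$ is holomorphic and uniformly bounded (say by a constant $C_i$) on $D(\lambda_i,r)$. (3) Use the triangle inequality in both directions: $\bigl|\|R(z)\| - \tfrac{\kappa(\lambda_i)}{|z-\lambda_i|}\bigr| \le C_i$, hence $\{|z-\lambda_i| < \eps\kappa(\lambda_i)/(1+C_i\eps\kappa(\lambda_i)/\kappa(\lambda_i))\}\subseteq \cdots$ — more cleanly, $D(\lambda_i, \tfrac{\kappa(\lambda_i)}{1/\eps + C_i}) \subseteq \Lambda_\eps^{(i)} \subseteq D(\lambda_i, \tfrac{\kappa(\lambda_i)}{1/\eps - C_i})$ for $\eps < 1/C_i$. (4) Take areas: each $\Lambda_\eps^{(i)}$ has area $\pi\eps^2\kappa(\lambda_i)^2(1+O(\eps))$, intersect with $B$ noting that for $\lambda_i\in B$ open the whole small piece eventually lies in $B$ while for $\lambda_i\notin \overline{B}$ it eventually lies outside $B$ (the finitely many $\lambda_i\in\partial B$ contribute pieces of area $O(\eps^2)$ but must be shown not to contribute in the limit — this needs a word, using that $B$ is measurable/open and the piece shrinks to a point, so its intersection with $B$ has area $o(\eps^2)$ only if the point is not interior; here one should restrict attention to $\lambda_i$ in the interior of $B$, which is all of $B$ since $B$ is open, and argue boundary eigenvalues contribute negligibly). (5) Sum over $i$, divide by $\eps^2$, and let $\eps\to 0$.

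The main obstacle is step (2)–(3): making the two-sided comparison between $\|R(z)\|$ and the rank-one approximation fully rigorous, in particular controlling $\|R(z)\|$ from \emph{below} by $\tfrac{\kappa(\lambda_i)}{|z-\lambda_i|} - C_i$. The lower bound follows from choosing the test vector realizing the rank-one norm: if $\|v_iw_i^*\| = \|w_i\|\cdot\|v_i\|$ is attained by feeding $w_i/\|w_i\|$ into the functional, then $\|R(z)u\| \ge \tfrac{\|v_iw_i^*u\|}{|z-\lambda_i|} - \|H_i(z)u\|$ for a suitable unit vector $u$, and one picks $u$ aligned so that $\|v_iw_i^*u\| = \kappa(\lambda_i)$. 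A minor secondary issue is the treatment of eigenvalues on $\partial B$, handled by the openness of $B$ as noted. Everything else is routine complex analysis and area estimates; no probability is needed here, since this lemma is purely deterministic.
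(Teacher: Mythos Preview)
Your proposal is correct and follows essentially the same approach as the paper: use the spectral expansion of the resolvent and the two-sided triangle inequality $\bigl|\|R(z)\| - \kappa(\lambda_i)/|z-\lambda_i|\bigr| \le C_i$ on a punctured neighborhood of each $\lambda_i$ to sandwich the local component of $\Lambda_\eps$ between disks of radius $\eps\kappa(\lambda_i)(1+O(\eps))$, then sum areas. The only difference is that for the outer disk the paper quotes Theorem~52.1 of Trefethen--Embree rather than deriving it directly as you do; your worry about eigenvalues on $\partial B$ is a real edge case that the paper also glosses over (and which cannot in general be dismissed as $o(\eps^2)$---a boundary eigenvalue can contribute $\Theta(\eps^2)$ area to $\Lambda_\eps\cap B$), but it is harmless for the downstream application since the random eigenvalues almost surely avoid $\partial B$.
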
 

\begin{proof}
	Write the spectral decomposition 
	\[ 
		(zI-M)^{-1} = \sum_{i=1}^n \frac{v_i w_i^*}{z - \lambda_i}, 
	\]
	where the $v_i$ and $w_i^\ast$ are right and left eigenvectors of $M$,
	respectively. Since the $\lambda_i$ are distinct, we may choose 
	$\epsilon_0 > 0$ sufficiently small to guarantee that there exists a constant $C>0$ satisfying (1) the disks $D(\lambda_i,\epsilon_0)$ are disjoint; (2) for every $\lambda_i\in B$ the disk $D(\lambda_i,\epsilon_0)$ is contained in $B$; and (3) whenever $z\in D(\lambda_i,\epsilon_0)$ for some $i$,
	\begin{equation}
		\Vert (zI-M)^{-1} \Vert \ge \frac{\Vert v_i w_i^* \Vert }{|z-\lambda_i|} - C  = \frac{\kappa(\lambda_i)}{|z - \lambda_i|} - C. \label{eq:ps-pole}
	\end{equation}
	Using the definition of the $\eps$-pseudospectrum in (\ref{eqn:ps1}),
	we rearrange (\ref{eq:ps-pole}) to obtain
	\[ 
		\Lambda_\eps(M)\cap B\supset \left\{ z : |z - \lambda_i| \le \min\left\{ \epsilon_0, \frac{\kappa(\lambda_i) \eps}{1 + \eps C}\right\},\textrm{ for some }  \lambda_i\in B\right\}.
	\]
	Thus, taking $\eps$ small enough, we have
	\[ 
		\liminf_{\eps \to 0} \frac{\vol(\Lambda_\eps(M)\cap B)}{\eps^2} \ge \pi \sum_{i=1}^n\kappa(\lambda_i)^2.
	\] 

    For the opposite inequality, Theorem 52.1 of \cite{trefethen2005spectra} states that the $\eps$-pseudospectrum is contained in disks around the eigenvalues $\lambda_i$ of radii $\eps \kappa(\lambda_i) + O(\eps^2)$. Choosing $\epsilon$ small enough so that for $\lambda_i\in B$ these disks are entirely contained in $B$:
    \begin{align*} 
	    \vol(\Lambda_\eps\cap B) &\le \sum_{\lambda_i\in B} \pi (\eps \kappa(\lambda_i) + O(\eps^2))^2 
	    \Rightarrow \limsup_{\eps \to 0}  \frac{\vol(\Lambda_\eps\cap B)}{\eps^2} \le  \sum_{\lambda_i\in B} \pi \kappa(\lambda_i)^2.
    \end{align*}
\end{proof}

Next, we show that for fixed $\epsilon>0$, any particular point $z\in \C$ is
unlikely to be in $\Lambda_\epsilon(A+\delta G_n)$. This is based on the
following singular value estimate, which generalizes Theorem \ref{thm:edelman}.
\begin{lemma}[Small Ball Estimate for $\sigma_n$]\label{lem:anti}
    Let $M$ be an $n\times n$ matrix with complex entries, and $G$ be drawn from the Ginibre ensemble. Then for all $\delta>0$ and $\epsilon>0$
    $$
        \dP\left[\sigma_n(M + \delta G_n) < \epsilon\right] \le n^2 \frac{\epsilon^2}{\delta^2}.
    $$
\end{lemma}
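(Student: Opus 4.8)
The plan is to reduce the small ball estimate for $\sigma_n(M+\delta G_n)$ to the centered case (Theorem \ref{thm:edelman}) via \'Sniady's comparison theorem (Theorem \ref{thm:sniady}). The key observation is that $\sigma_n$ is monotone in the sense required: if $\sigma_i(A_1)\le\sigma_i(A_2)$ for all $i$, then the coupling of Theorem \ref{thm:sniady} gives $\sigma_n(A_1+\sqrt t\,G_1)\le\sigma_n(A_2+\sqrt t\,G_2)$ almost surely, so $\dP[\sigma_n(A_2+\sqrt t\,G_2)<\epsilon]\le\dP[\sigma_n(A_1+\sqrt t\,G_1)<\epsilon]$. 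Taking $A_1=0$ (whose singular values are all zero, hence pointwise dominated by those of any matrix), $A_2=M$, and $\sqrt t=\delta$, we would get
\[
\dP[\sigma_n(M+\delta G_n)<\epsilon]\le\dP[\sigma_n(\delta G_n)<\epsilon]=\dP[\sigma_n(G_n)<\epsilon/\delta]\le n^2\epsilon^2/\delta^2,
\]
where the last inequality is exactly Theorem \ref{thm:edelman} applied with parameter $\epsilon/\delta$.

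First I would note that the roles here are set up precisely for $A_1=0$: all its singular values vanish, so $\sigma_i(A_1)=0\le\sigma_i(A_2)$ holds trivially for any $A_2=M$. Second, I would record the scaling identity $\sigma_n(\delta G_n)=\delta\,\sigma_n(G_n)$, which is immediate since $\delta G_n$ has the law of a Ginibre matrix scaled by $\delta$. Third, I would invoke Theorem \ref{thm:edelman}, which gives $\dP[\sigma_n(G_n)<\epsilon/\delta]=1-e^{-(\epsilon/\delta)^2 n^2}\le n^2\epsilon^2/\delta^2$. Assembling these three steps, together with the monotonicity of probabilities under the almost-sure domination from the coupling, yields the claimed bound.

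The one technical obstacle is that Theorem \ref{thm:sniady} requires the hypothesis that the singular values of $A_1$ are distinct and those of $A_2$ are distinct, which fails for $A_1=0$ (all singular values equal $0$) and possibly for $A_2=M$. The remedy is a limiting argument of exactly the kind used in the proof of Proposition \ref{prop:sst}: approximate $0$ and $M$ by sequences of matrices with distinct singular values (and with $\sigma_i$ of the first still pointwise below $\sigma_i$ of the second), apply Theorem \ref{thm:sniady} to each pair, and pass to the limit using the continuous mapping theorem together with continuity of $\sigma_n(\cdot)$ and of the Ginibre perturbation. Since the bound $n^2\epsilon^2/\delta^2$ is continuous in the relevant parameters, it survives the limit. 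I expect this approximation bookkeeping to be the only nonroutine part; the core comparison-plus-scaling argument is short. A small point to handle carefully: one should fix $\epsilon$ and $\delta$ and note that $\{\sigma_n<\epsilon\}$ is an open condition (or use that the limiting distribution has no atom at $\epsilon$ generically, or simply take a lim sup of the probabilities), so that the inequality is preserved in the limit rather than merely the non-strict version.
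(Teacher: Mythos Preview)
Your proposal is correct and follows essentially the same approach as the paper: the paper's proof simply says ``repeat the proof of Proposition \ref{prop:sst} using instead Theorems \ref{thm:edelman} and \ref{thm:sniady},'' which is exactly the comparison-plus-scaling argument you outline, including the approximation step to handle the distinctness hypothesis in \'Sniady's theorem.
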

\begin{proof}
	Repeat the proof of Proposition \ref{prop:sst} using instead Theorems \ref{thm:edelman} and \ref{thm:sniady}. 
\end{proof}
\begin{remark} \label{rem:real} 
	If one is willing to lose a small constant factor in the bound, Lemma \ref{lem:anti} has an elementary geometric proof (which avoids stochastic calculus), essentially identical to the proof of Sankar-Spielman-Teng \cite[Theorem 3.1]{sankar2006smoothed} in the case of real Ginibre perturbations. Note however that it is crucial to use a {\em complex} Gaussian for our purposes. A real Gaussian would yield a small ball estimate of order $\epsilon$ (see Proposition \ref{prop:sst}) rather than $\epsilon^2$, which is not good enough to take the limit below.
\end{remark}
\begin{proof}[Proof of Theorem \ref{thm:b}]
	For every $z\in \C$ we have the upper bound
	\begin{equation}
	\label{eqn:smoothed}
		\dP[z \in \Lambda_\epsilon(A + \delta G_n)] = \dP [\sigma_n(zI-(A+\delta G_n))<\epsilon]  \le n^2 \frac{\eps^2}{\delta^2},
	\end{equation}
		by applying Lemma \ref{lem:anti} to $M=zI-A$ and noting that $G$ and $-G$ have the same distribution.

	Fix a measurable open set $B\subset \C$. Then
	\begin{align}\label{eqn:volbound}
		\dE \,\vol (\Lambda_\epsilon(A + \delta G_n)\cap B) \nonumber
		&= \dE \int_B \mathds{1}_{\{z \in \Lambda_\epsilon(A + \delta G_n)\}}\, dz \nonumber\\
		&= \int_B \dE \{z \in \Lambda_\epsilon(A + \delta G_n)\} \,dz & &\textrm{by Fubini} \nonumber\\
		&\le \int_B n^2\frac{\eps^2}{\delta^2}\,dz & & \textrm{by \eqref{eqn:smoothed}} \nonumber\\
		&= n^2\frac{\eps^2}{\delta^2}\vol(B)
	\end{align}
	where the integrals are with respect to Lebesgue measure on $\C$. 
	Finally, taking a limit as $\epsilon\to 0$ yields the desired bound:
	\begin{align*}
		\dE \sum_{\lambda_i\in B} \kappa(\lambda_i^2) 
		&= \dE \liminf_{ \epsilon\to 0} \frac{\vol(\Lambda_\epsilon(A + \delta G_n)\cap B)}{\pi \epsilon^2} & &\textrm{by Lemma \ref{prop:limit}}\\
		&\le \liminf_{\epsilon\to 0}\dE \frac{\vol(\Lambda_\epsilon(A + \delta G_n)\cap B)}{\pi\epsilon^2} & & \textrm{by Fatou's Lemma}\\
		&\le \frac{n^2\vol(B)}{\pi \delta^2} & &\textrm{by \eqref{eqn:volbound}}.
	\end{align*}
\end{proof}

\section{Optimality of the Bounds}\label{sec:lower}
We first show that Theorem \ref{thm:a} has essentially the optimal dependence on $\delta$ for $n$ large.  The  example  which  requires  this dependence is simply a Jordan block $J$, for which Davies \cite{davies} established the upper bound $\kappa_V(J+\delta E) \le 2/\delta^{1-1/n}$, for some $E$ with $||E|| < 1$.


\begin{proposition} \label{prop:lowerdelta}
	Fix $n > 0$ and let $J \in \mathbb{C}^{n \times n}$ be the upper triangular Jordan block with ones on the superdiagonal and zeros everywhere else. Then there exist $c_n > 0$ and $\delta_n>0$ such that for all $E \in \mathbb{C}^{n \times n}$ with $\Vert E \Vert \le 1$ and all $\delta < \delta_n$, we have \[\kappa_V(J + \delta E) \ge \frac{c_n}{\delta^{1 - 1/n}}.\]
\end{proposition}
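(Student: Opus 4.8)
The plan is to exploit a converse of the pseudospectral estimate used to prove Theorem~\ref{thm:b}: there, a small $\kappa(\lambda_i)$ forced $\Lambda_\epsilon$ to shrink quadratically in $\epsilon$; here we run the argument backwards, showing that the pseudospectrum of $J+\delta E$ is \emph{not} small --- it contains a disk of radius $\asymp\delta^{1/n}$ --- which forces $\kappa_V$ to be large. Since nondiagonalizable matrices have $\kappa_V=\infty$, we may assume $J+\delta E$ is diagonalizable throughout.

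First I would record the elementary Bauer--Fike bound: for any diagonalizable $M=VDV^{-1}$ one has $\|(zI-M)^{-1}\|\le\|V\|\|V^{-1}\|/\dist(z,\Lambda(M))$, and taking the infimum over diagonalizations, $\|(zI-M)^{-1}\|\le\kappa_V(M)/\dist(z,\Lambda(M))$. By the characterization \eqref{eqn:ps1} of the pseudospectrum, $\Lambda_\epsilon(M)$ is therefore contained in the union of at most $n$ disks of radius $\epsilon\kappa_V(M)$ centered at the eigenvalues of $M$, so $\vol(\Lambda_\epsilon(M))\le n\pi\epsilon^2\kappa_V(M)^2$, i.e.
\[
  \kappa_V(M)\ \ge\ \frac{1}{\epsilon}\sqrt{\frac{\vol(\Lambda_\epsilon(M))}{n\pi}}.
\]

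Second, I would use monotonicity of the pseudospectrum under perturbation. Using the first (eigenvalue-of-perturbation) characterization of $\Lambda_\epsilon$: if $z\in\Lambda_{\epsilon-\delta}(J)$, then $z\in\Lambda(J+F_0)$ for some $\|F_0\|<\epsilon-\delta$, hence $z\in\Lambda\bigl((J+\delta E)+(F_0-\delta E)\bigr)$ with $\|F_0-\delta E\|\le\|F_0\|+\delta<\epsilon$, so $z\in\Lambda_\epsilon(J+\delta E)$. Thus $\Lambda_\epsilon(J+\delta E)\supseteq\Lambda_{\epsilon-\delta}(J)$ for every $\epsilon>\delta$, and it remains to lower bound $\vol(\Lambda_{\epsilon'}(J))$ for small $\epsilon'>0$. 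For the nilpotent Jordan block we have $(zI-J)^{-1}=\sum_{k=0}^{n-1}z^{-(k+1)}J^k$ with $\|J^k\|=1$ for all $0\le k\le n-1$, so the triangle inequality gives $\|(zI-J)^{-1}\|\ge|z|^{-n}-\sum_{k=1}^{n-1}|z|^{-k}\ge|z|^{-n}(1-2|z|)\ge\tfrac12|z|^{-n}$ whenever $|z|\le 1/4$. Hence, again by \eqref{eqn:ps1}, the disk $D\bigl(0,(\epsilon'/2)^{1/n}\bigr)$ lies in $\Lambda_{\epsilon'}(J)$ as soon as $(\epsilon'/2)^{1/n}\le 1/4$, giving $\vol(\Lambda_{\epsilon'}(J))\ge\pi(\epsilon'/2)^{2/n}$. (One could alternatively cite the explicit computation of the pseudospectrum of a Jordan block in \cite{trefethen2005spectra}.)

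Finally, taking $\epsilon=2\delta$ so that $\epsilon-\delta=\delta$, and combining the three ingredients, for all $\delta\le\delta_n:=2^{1-2n}$ (the constraint ensuring $(\delta/2)^{1/n}\le 1/4$) we get
\[
  \kappa_V(J+\delta E)\ \ge\ \frac{1}{2\delta}\sqrt{\frac{\pi(\delta/2)^{2/n}}{n\pi}}\ =\ \frac{1}{2^{1+1/n}\sqrt{n}}\cdot\frac{1}{\delta^{\,1-1/n}},
\]
which is the claimed bound with $c_n=2^{-1-1/n}/\sqrt{n}$. Nothing here is a deep obstacle; the conceptual point is the two-sided link between $\kappa_V$ and the area of the pseudospectrum, and the only fiddly steps are the correct direction of the monotonicity inclusion and keeping the constants (and the threshold $\delta_n$) honest in the resolvent estimate for $J$.
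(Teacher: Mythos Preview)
Your proof is correct and takes a genuinely different route from the paper. The paper argues pointwise on a single eigenpair: it first bounds $|\lambda|=O(\delta^{1/n})$ via nilpotence, then uses the shift structure of $J$ to show that the coordinates of the unit right eigenvector $v$ satisfy $(\sum_{i>k}|v_i|^2)^{1/2}=O(\delta^{k/n})$ (and symmetrically for the left eigenvector $w$), and finally bounds $|w^*v|\le\sum_j|w_j||v_j|=O(\delta^{1-1/n})$ directly. Your argument instead stays at the level of sets: Bauer--Fike gives the area upper bound $\vol(\Lambda_\epsilon)\le n\pi\epsilon^2\kappa_V^2$, the perturbation inclusion $\Lambda_\epsilon(J+\delta E)\supseteq\Lambda_{\epsilon-\delta}(J)$ transfers the problem to the unperturbed Jordan block, and a resolvent estimate shows $\Lambda_{\epsilon'}(J)$ contains a disk of radius $\asymp(\epsilon')^{1/n}$. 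This is a clean ``reverse'' of the pseudospectral machinery used in the proof of Theorem~\ref{thm:b}, and in that sense fits the paper's theme even more tightly than the paper's own proof of this proposition; it also yields an explicit constant $c_n=2^{-1-1/n}/\sqrt{n}$. The paper's approach, on the other hand, extracts finer structural information about the eigenvectors themselves (their coordinate decay), which your area argument does not see.
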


\begin{proof}
	As a warm-up, we'll need the following bound on the pseudospectrum of $J$. Let $\lambda$ be an eigenvalue of $J + \delta E$, with $v$ its associated right eigenvector; then $(J + \delta E)^nv = \lambda^n v$ and, accordingly, $|\lambda|^n \le \|(J + \delta E)^n\|$. Expanding, using nilpotence of $J$, $\| J\| = 1$, and submultiplicativity of the operator norm, we get
	\begin{equation} \label{eq:J-pseudo}
		|\lambda|^n \le \|(J + \delta E)^n\| \le (1 + \delta)^n - 1 = O(\delta)
	\end{equation}
	where the big-$O$ refers to the limit $\delta \to 0$ (recall $n$ is fixed).

	Writing $J + \delta E = V^{-1}DV$, we want to lower bound the condition number of $V$. As above, let $\lambda$ be an eigenvalue of $J + \delta E$, now writing $w^\ast$ and $v$ for its left and right eigenvectors. We'll use the lower bound
	$$
		\kappa(V) = \|V^{-1}\|\|V\| \ge  \frac{\|w^\ast\|\|v\|}{|w^\ast v|}=\kappa(\lambda).
	$$
	Since the formula above is agnostic to the scaling of the left and right eigenvectors, we'll assume that both have unit length and show that $|w^\ast v|$ is small.

	Let $0 \le k \le n$. Then $\Vert (J + \delta E)^k v \Vert = |\lambda|^k$, and analogously to \eqref{eq:J-pseudo}, 
	$$
	    \| (J + \delta E)^k - J^k\| \le (1 + \delta)^k - 1 = O(\delta).
	$$
	Since $J$ acts on the left as a left shift,
	\begin{align*} 
	    \left( \sum_{i=k+1}^n |v_i|^2 \right)^{1/2} 
	    &= \Vert J^k v \Vert\\
	    &\le \Vert (J+\delta E)^k v\Vert  + \Vert (J^k - (J+\delta E)^k)v\Vert \\
	    &\le |\lambda|^k + O(\delta) \\
	    &= O(\delta^{k/n}),
	\end{align*}
	where the final line follows from \eqref{eq:J-pseudo}. Similarly,
	\[ 
	    \left(\sum_{i=1}^{n-k} |w_i|^2 \right)^{1/2} = \Vert w^\ast J^k \Vert = O(\delta^{k/n}).
	\]
	Finally, we have $\kappa(V)^{-1} = |w^* v| \le \displaystyle\sum_{j=1}^n |w_j| |v_j|$, which in turn is at most
	\begin{align*}
	    \sum_{j=1}^n \left( \sum_{i=1}^j |w_i|^2 \right)^{1/2} \left( \sum_{i=j}^n |v_i|^2 \right)^{1/2} = O( \delta^{(n-j)/n} \delta^{(j-1)/n}) = O(\delta^{1 - 1/n}).
	\end{align*}

\end{proof}

We end by showing that the dependence on $n$ in Theorem \ref{thm:b} cannot be improved. 

\begin{proposition}There exists $c > 0$ such that for all $n$, 
$$
		\dE\sum_{i\in[n]}\kappa^2\left(\lambda_i(G_n)\right) \ge cn^2.
	$$
\end{proposition}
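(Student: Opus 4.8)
The plan is to show that already the eigenvalues falling inside a fixed small disk $D = D(0,r_0)$ (say $r_0 = 1/8$) contribute $\Omega(n^2)$ in expectation. The engine is the finite-$n$ formula of Chalker--Mehlig for the mean eigenvector overlap, together with Jensen's inequality.

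First I would pass to the integral-geometric form. Setting $m_n(z) := \dE[\kappa(\lambda)^2 \mid G_n \text{ has an eigenvalue at } z]$, exchangeability and the definition of the $1$-point correlation function $\rho_n^{(1)}$ give $\dE \sum_{\lambda_i \in D} \kappa(\lambda_i)^2 = \int_D \rho_n^{(1)}(z)\, m_n(z)\, dz$. Next, from the Schur decomposition of $G_n$ (Haar unitary, an independent diagonal part carrying the Ginibre eigenvalues, and a strictly upper-triangular part with i.i.d.\ $N(0,1/n)$ entries that is independent of the eigenvalues), solving $w^\ast(D+T) = z\,w^\ast$ by back-substitution shows that the right eigenvector for $z$ is $e_1$ while the left one has squared norm with $T$-conditional expectation $\prod_{j\ge 2}(1 + \tfrac1{n|\lambda_j - z|^2})$; hence $m_n(z) = \dE\big[\prod_j (1 + \tfrac1{n|\lambda_j-z|^2})\big]$, the product and expectation being over the other $n-1$ eigenvalues given one is at $z$ (this is the Chalker--Mehlig identity, which one may also simply cite). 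By Jensen, $m_n(z) \ge \exp\big(\dE \sum_j \log(1+\tfrac1{n|\lambda_j - z|^2})\big)$, and the exponent equals $\int_\C \log(1 + \tfrac1{n|w-z|^2})\,\rho_n^{(1\mid1)}(w\mid z)\,dw$ with $\rho_n^{(1\mid1)}(w\mid z) = \rho_n^{(2)}(z,w)/\rho_n^{(1)}(z)$. For $z \in D$, splitting this integral at $|w-z| = r_0$: the part with $|w-z| > r_0$ is $O(1)$ (bound the logarithm by $\tfrac1{nr_0^2}$ and the integral of $\rho_n^{(1\mid1)}$ by $n$); on the near part the two points stay in a fixed compact subset of the open unit disk, where the Ginibre one- and two-point functions equal their bulk forms $\rho_n^{(1)}(w) = \tfrac n\pi(1+O(e^{-cn}))$ and $\rho_n^{(1\mid1)}(w\mid z) = \tfrac n\pi(1 - e^{-n|z-w|^2})(1+O(e^{-cn}))$ with exponentially small error. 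The substitution $u = n|w-z|^2$ then turns the near part into $\int_0^{R}\log(1+\tfrac1u)(1-e^{-u})\,du$ with $R \asymp n$, which is $\log n + O(1)$ by the elementary estimate $\int_1^R \log(1+\tfrac1u)\,du = \log R + O(1)$. Thus $m_n(z) \ge e^{\log n + O(1)} = cn$ for all $z \in D$ and all $n \ge n_0$, and since $\rho_n^{(1)}(z) \ge \tfrac{n}{2\pi}$ on $D$ we obtain $\dE\sum_i \kappa(\lambda_i)^2 \ge \int_D \tfrac n{2\pi}\cdot cn \,dz = c' n^2$. For the finitely many $n < n_0$ the bound $\kappa(\lambda_i) \ge 1$ gives $\dE\sum_i\kappa(\lambda_i)^2 \ge n \ge n^2/n_0$, so the proposition holds with constant $\min(c', 1/n_0)$.

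The delicate point --- and the main place to be careful --- is the evaluation of the inner expectation: the additive error in $\log m_n(z)$ must be a genuine $O(1)$, not $o(\log n)$. A crude bound such as $\rho_n^{(1\mid1)}(w\mid z) \ge (1-\varepsilon)\rho_n^{(1)}(w)$ on an annulus only delivers $m_n(z) \gtrsim n^{1-\varepsilon}$ and hence the strictly weaker $\dE\sum\kappa^2 \gtrsim n^{2-\varepsilon}$; recovering the full $n^2$ forces one to use the exponentially accurate bulk asymptotics of the Ginibre kernel, which is exactly why $z$ is confined to a small disk around the origin and the $w$-integral is truncated. A secondary point is making the Schur-form identity rigorous --- interpreting ``conditioned on an eigenvalue at $z$'' through the joint eigenvalue density and using independence of the strictly upper-triangular part from the eigenvalues --- but this is classical. (One could instead try a pseudospectrum route, bounding $\dE\,\vol(\Lambda_\epsilon(G_n))$ below for small $\epsilon$ via a matching lower small-ball estimate $\dP[\sigma_n(zI-G_n)<\epsilon]\gtrsim n^2\epsilon^2$ --- exact at $z=0$ by Theorem~\ref{thm:edelman} --- and then Lemma~\ref{prop:limit}; but passing to $\epsilon\to0$ in the favourable direction needs uniform control of $\vol(\Lambda_\epsilon(G_n))/\epsilon^2$, which does not seem easier.)
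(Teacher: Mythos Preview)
Your argument is correct and shares the paper's high-level strategy: restrict to eigenvalues in a fixed bulk disk and show that each contributes order $n$ to $\dE\,\kappa(\lambda)^2$. The difference is in how that last fact is obtained. The paper simply cites Bourgade--Dubach \cite{bourgade2018distribution}, who prove the uniform limit $\dE[\kappa(\lambda)^2 \mid \lambda = z]/n \to 1-|z|^2$ on compact subsets of the open unit disk, and combines this with the circular law to conclude $\liminf_{n\to\infty} n^{-2}\,\dE\sum_i\kappa(\lambda_i)^2 \ge r^2(1-r^2)$; small $n$ is handled trivially. You instead rederive the needed lower bound $m_n(z)\ge cn$ from scratch: the Chalker--Mehlig product formula via the Schur decomposition, Jensen's inequality, and the exponentially accurate bulk asymptotics for the Ginibre one- and two-point functions to evaluate the resulting integral. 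Your route is more self-contained (it avoids citing the full distributional results of \cite{bourgade2018distribution}) and yields a genuinely nonasymptotic bound for all $n\ge n_0$, at the cost of a longer and more technical computation; the paper's route is a two-line black-box argument. One minor remark: your upper bound on the ``far'' part of the exponent is not actually needed for the lower bound on $m_n(z)$, since that part is nonnegative and can simply be dropped.
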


\begin{proof}
Bourgade and Dubach \cite[Theorem 1.1, Equation 1.8]{bourgade2018distribution} show that eigenvalue condition numbers in the bulk of the spectrum of complex Ginibre matrices are of order $\sqrt{n}$.  Precisely,
	$$
		\lim_{n \to \infty} \frac{\dE [ \kappa(\lambda_i)^2 | \lambda_i = z ] }{n} = 1 - |z|^2
	$$
	uniformly for (say) $z \in D(0,r)$ for any $r< 1$.  The classical \emph{circular law} for the limiting spectral distribution of Ginibre matrices ensures that 
	\[ \lim_{n \to \infty} \frac{ \dE | \Lambda(G_n) \cap D(0,r)|}{n} = \frac{\text{vol}(D(0, r))}{\text{vol}(D(0, 1))} = r^2 .\]  Thus,
	\[ \liminf_{n \to \infty}  \frac{\dE\sum_{i\in[n]}\kappa\left(\lambda_i(G_n)\right)^2}{n^2}  \ge r^2 (1 - r^2) > 0. \]
\end{proof}

\section{Conclusion and Discussion}\label{sec:conc}
A key theme in our work is the interplay between the related notions of eigenvector condition number $\kappa_V$, eigenvalue condition number $\kappa(\lambda_i)$ and pseudospectrum $\Lambda_\eps$. Equally important is the fact that global objects such as $\kappa_V$ and $\Lambda_\eps$ can be controlled by local quantities, specifically the least singular values of shifts $\sigma_n(zI - M)$ for each $z \in \mathbb{C}$. The proof also heavily exploits the left and right unitary invariance of the Ginibre ensemble (via Theorem \ref{thm:sniady}, due to \'Sniady) as well as anticoncentration of the complex Gaussian.

One natural question is whether similar results hold if one replaces Gaussian perturbations with a different class of random perturbations $G'$.  To apply the approach in this paper, the key difficulty would be obtaining suitable bounds for the least singular value of $z - A - \delta G'$. Davies \cite{davies} presents experimental evidence that Theorem \ref{thm:a} holds for random real rank-one perturbations and random real Gaussian perturbations, but a proof (or disproof) remains to be found.  See Remark \ref{rem:real} for a discussion of why our proof does not extend to the case of real Gaussian perturbations.

One may also ask if Theorem \ref{thm:a} can be derandomized; that is, if the regularizing perturbation $E$ can be chosen by a deterministic algorithm given $A$ as input. One natural choice would be to perturb in the direction of the nearest normal matrix in either operator or Frobenius norm, the latter of which can be written as a certain optimization problem over unitary matrices \cite{ruhe1987closest}.

Proposition \ref{prop:lowerdelta} shows that the upper bound in Theorem
\ref{thm:a} is tight in the perturbation size $\delta$.  Now, let $c_n$ be the
smallest constant such that Theorem \ref{thm:a} holds with an upper bound of
$c_n/\delta$.  Theorem \ref{thm:a} implies that $c_n \le 8 n^{3/2}$, and since
$\kappa_V  = \Vert V \Vert \Vert V^{-1} \Vert \ge 1$ for any matrix, we have
$c_n \ge 1$.  It would be interesting to determine the correct asymptotic
behavior of $c_n$. Davidson, Herrero, and Salinas asked in 1989 \cite{davidson1989quasidiagonal} whether the statement of
Theorem \ref{thm:a} is possible with $\kappa_V(A+E)$ depending only on $\delta$ and not on $n$.
In the present context, we can ask the more refined question: does Theorem
\ref{thm:a} hold with bounded $c_n$, or must $c_n$ go to infinity with $n$?

\section*{Acknowledgements}
J.B. is supported by the NSF Graduate Research Fellowship Program under Grant DGE-1752814. A.K., S.M., and N.S. are supported by NSF Grant CCF-1553751. This work was done at the Simons Institute at UC Berkeley as part of the Spring 2019 program "Geometry of Polynomials." We thank Jorge Garza Vargas for helpful conversations and for pointing out errors in an earlier version of this paper.  We thank Stanis\l{}aw Szarek for pointing out the reference \cite{davidson1989quasidiagonal} to us. We thank Amol Aggarwal, Milind Hegde, Tyler Maltba, and Jim Pitman for helpful conversations regarding stochastic differential equations. Finally, we thank the anonymous referee for thoughtful comments on an earlier version of this paper.

\bibliographystyle{alpha}

\bibliography{davies}

\appendix
\section{Proof of Theorem \ref{thm:sniadyreal}}
The goal of this appendix is to adapt \'Sniady's  \cite{sniady2002random} proof of Theorem \ref{thm:sniady}, as outlined below the statement of Theorem \ref{thm:sniady},  to the case of real matrices with real Ginibre perturbations.

The stochastic differential equation satisfied by the squared singular values of a real matrix Brownian motion was derived by Bru in her work on Wishart processes \cite{bru1989diffusions, bru1991wishart} and independently by Le in her work on shape theory \cite{le1994brownian, le1999singular}.  The equation reads as follows:

\begin{equation} \label{eqn:bru}
    d\lambda_i = \frac{2\sqrt{\lambda_i}}{n}\,dB_i + \left( 1+\sum_{j \ne i} \frac{\lambda_i + \lambda_j}{\lambda_i - \lambda_j}\right)\,dt, \qquad 1 \le i \le n.
\end{equation}

The proof strategy of \' Sniady crucially relies on the existence and uniqueness of strong solutions to the singular value SDE. This is needed in order to obtain two solutions driven by the same Brownian motion, and to assert that the law of each solution indeed matches the law of the singular values of a noncentered Ginibre matrix. See \cite{anderson2010introduction} for a definition of strong solution and a rigorous proof of existence and uniqueness of strong solutions for Dyson Brownian motion, the Hermitian analogue of the Ginibre singular values process.

Fortunately, such results are known for the SDE (\ref{eqn:bru}).  Let $\Lambda$ denote the domain
\[\Lambda \in \mathbb{R}^n := \{\lambda : 0 \le \lambda_n < \dots < \lambda_1\}.\]
For any initial data $\lambda(0)$ lying in the closure $\overline{\Lambda}$, it is known that strong solutions to (\ref{eqn:bru}) exist, are unique, and lie in $\Lambda$ for all $t > 0$, almost surely \cite[Corollary 6.5]{graczyk2014strong}.  Combining this with \cite[Theorem 1]{bru1989diffusions}, we have that for initial data $\lambda(0)$ lying in $\Lambda$, the law of the strong solutions to (\ref{eqn:bru}) matches the law of the squared singular values process of $A + M/\sqrt{n}$, where $M$ is a matrix of i.i.d. standard real Brownian motions and $A$ has squared singular values $\lambda(0)$.  (It should be possible to extend this last statement for initial data in $\overline{\Lambda}$, but the proof may be somewhat involved---cf. \cite{anderson2010introduction}, which contains a proof of the corresponding extension for Dyson Brownian motion.)  

Let $a_i(\lambda) = 1 + \sum_{j \ne i} \frac{\lambda_i + \lambda_j}{\lambda_i - \lambda_j}$ denote the drift coefficient in (\ref{eqn:bru}).  As in \'Sniady's proof for the complex Ginibre case (Theorem \ref{thm:sniady}), the key property of $a$ allowing for the comparison theorem is the so-called \emph{quasi-monotonicity} (see \cite{ding1998new}) or \emph{Kamke--Wa\.zewski condition} \cite[\S XI.13]{mitrinovic1991inequalities} from differential inequalities, which is simply that 
\begin{equation} \text{for all $i$, } a_i(\lambda^{(1)}) \le a_i(\lambda^{(2)})  \text{ whenever } \lambda^{(1)}_i = \lambda^{(2)}_i\text{ and } \lambda^{(1)}_j \le \lambda^{(2)}_j \text{ for all } j \ne i. \label{kw} \end{equation}  One easily checks that $a$ satisfies this condition on the domain $\Lambda$.

The nonconstant (indeed, non-Lipschitz) diffusion coefficient $2\sqrt{\lambda_i}/n$ in (\ref{eqn:bru}) is a technical obstacle which does not appear in the SDE (\ref{sde}) for the complex case.  Consequently, the final step of \'Sniady's proof as sketched below Theorem \ref{thm:sniady} cannot be repeated naively, because taking the difference of two solutions no longer cancels out the diffusion terms. Fortunately, theory has been developed to handle H\"older-$1/2$ diffusion coefficients; see \cite[\S IX.3]{revuz2013continuous} for exposition of the one-dimensional case and see \cite{krasin2010comparison} for a survey of comparison theorems for SDEs in general.

Quasi-monotonicity and the one-dimensional H\"older-$1/2$ comparison theory are combined in a rather general multidimensional comparison theorem of Gei{\ss} and Manthey \cite[Theorem 1.2]{geiss1994comparison}.  Applied to the SDE (\ref{eqn:bru}), this theorem provides exactly the right conclusion to replace the final step of \'Sniady's proof.  We state the relevant special case of their theorem below:
\begin{theorem}[Gei\ss-Manthey] \label{thm:geissmanthey}
Consider the SDE
\[dX_i = \sigma_i(X)\,dB_i + a_i(X)\,dt, \qquad 1 \le i \le n,\]
where the $B_i$ are independent standard real Brownian motions, and $\sigma_i, a_i : \mathbb{R}^n \to \mathbb{R}$ are continuous.  Suppose the following conditions are satisfied:
\begin{enumerate}
    \item the drift coefficient $a$ satisfies the quasi-monotonicity condition (\ref{kw})
    \item  there exists $\rho : \mathbb{R}_+ \to \mathbb{R}_+$ increasing with $\int_{0}^\eps \rho^{-2}(u)\,du = \infty$ for some $\eps>0$, such that $|\sigma_i(x) - \sigma_i(y)| \le \rho(|x_i - y_i|)$ for all $i$ and all $x, y \in \mathbb{R}^n$
    \item strong solutions for the SDE exist for all time and are unique.
\end{enumerate} Suppose initial conditions $X^{(1)}(0)$ and $X^{(2)}(0)$ satisfy the inequality $X^{(1)}_i(0) \le X^{(2)}_i(0)$ for all $i$.  Then almost surely, $X^{(1)}_i(t) \le X^{(2)}_i(t)$ for all $i$ and for all $t > 0$.  
\end{theorem}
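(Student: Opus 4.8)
The plan is to run the classical Yamada--Watanabe/Ikeda--Watanabe pathwise-comparison argument coordinate by coordinate, using the quasi-monotonicity condition (\ref{kw}) to control the drift. Write $Z_i = X^{(1)}_i - X^{(2)}_i$. Because both processes are driven by the \emph{same} Brownian motions $B_i$ --- this is exactly where hypothesis $3$ (strong existence and uniqueness) is needed, since otherwise one cannot couple the two solutions through a common driving noise --- we have
\[
  dZ_i = \bigl(\sigma_i(X^{(1)}_s)-\sigma_i(X^{(2)}_s)\bigr)\,dB_i + \bigl(a_i(X^{(1)}_s)-a_i(X^{(2)}_s)\bigr)\,ds ,
\]
so in particular $d\langle Z_i\rangle_s = (\sigma_i(X^{(1)}_s)-\sigma_i(X^{(2)}_s))^2\,ds \le \rho(|Z_i(s)|)^2\,ds$ by hypothesis $2$. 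First I would localize with $\tau_N = \inf\{t : |X^{(1)}_t| + |X^{(2)}_t| \ge N\}$, so that $\tau_N \uparrow \infty$ a.s.\ (solutions are global), all stochastic integrals below become true martingales, and all coefficients are bounded on $[0,\tau_N]$. The goal is then to show $\sum_i \dE[Z_i(t\wedge\tau_N)^+] = 0$ for every $t$ and $N$, and finally let $N\to\infty$ and use continuity of paths to upgrade this to $Z_i(t)\le 0$ for all $t$ and all $i$, almost surely.

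For the smoothing, choose $1 = a_0 > a_1 > \cdots \downarrow 0$ with $\int_{a_m}^{a_{m-1}}\rho^{-2}(u)\,du = m$ (possible since $\int_{0+}\rho^{-2} = \infty$), and take $\phi_m \in C^2(\R)$ with $\phi_m \equiv 0$ on $(-\infty,0]$, $0\le \phi_m' \le 1$, $\phi_m(x)\to x^+$ and $\phi_m'(x)\to \mathds{1}\{x>0\}$ pointwise, and $0 \le \phi_m''(x) \le \tfrac{2}{m\rho^2(x)}\mathds{1}\{a_m < x < a_{m-1}\}$. Applying It\^o's formula to $\phi_m(Z_i(\cdot\wedge\tau_N))$, summing over $i$, and taking expectations kills the martingale terms and the initial values (since $Z_i(0)\le 0$, where $\phi_m$ vanishes), leaving
\[
  \sum_i \dE\bigl[\phi_m(Z_i(t\wedge\tau_N))\bigr] \le \frac{nt}{m} + \int_0^t \sum_i \dE\Bigl[\phi_m'(Z_i(s\wedge\tau_N))\bigl(a_i(X^{(1)})-a_i(X^{(2)})\bigr)\Bigr]\,ds ,
\]
where the $nt/m$ bounds the It\^o-correction terms, each $\tfrac12\phi_m''(Z_i)\rho(|Z_i|)^2 \le \tfrac1m$ on the support of $\phi_m''$ (on which $Z_i > 0$).

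It remains to handle the drift, which is where (\ref{kw}) does the work. Since $\phi_m'(Z_i(s))$ is supported on $\{Z_i(s) > 0\}$, i.e.\ $X^{(1)}_i(s) > X^{(2)}_i(s)$, I would decompose $a_i(X^{(1)}) - a_i(X^{(2)}) = [a_i(X^{(1)}) - a_i(v)] + [a_i(v) - a_i(X^{(2)})]$ through the point $v$ with $v_i = X^{(2)}_i$ and $v_j = X^{(1)}_j$ for $j\ne i$: the first bracket changes only coordinate $i$, and for the second, $v$ and $X^{(2)}$ agree in coordinate $i$, so raising each $v_j$ to $X^{(2)}_j + Z_j^+$ only increases $a_i$ by (\ref{kw}). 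Using the regularity of $a$ present in the full statement (e.g.\ $a$ locally Lipschitz, or admitting a nondecreasing concave modulus $\omega$ with $\omega(0)=0$ and $\int_{0+}\omega^{-1}=\infty$ --- for (\ref{eqn:bru}) local Lipschitz regularity on the ordered domain $\Lambda$ is automatic and suffices after localization), this gives $a_i(X^{(1)}) - a_i(X^{(2)}) \le 2\omega\bigl(\sum_j Z_j^+\bigr)$. Letting $m\to\infty$ (dominated convergence, valid since everything is bounded on $[0,\tau_N]$) and then using Jensen yields
\[
  \sum_i \dE[Z_i(t\wedge\tau_N)^+] \le 2n\int_0^t \dE\,\omega\!\Bigl(\textstyle\sum_j Z_j(s\wedge\tau_N)^+\Bigr)ds \le 2n\int_0^t \omega\!\Bigl(\textstyle\sum_j \dE[Z_j(s\wedge\tau_N)^+]\Bigr)ds ,
\]
and Bihari's inequality (or Gronwall, in the Lipschitz case) forces $\sum_i \dE[Z_i(t\wedge\tau_N)^+] \equiv 0$, completing the argument after $N\to\infty$.

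The main obstacle I anticipate is the drift step: quasi-monotonicity is only a one-sided monotonicity statement, not a quantitative estimate, so it must be combined with the modulus of continuity of $a$ via the intermediate-point decomposition to produce something a Gronwall/Bihari argument can digest --- and in the intended application to (\ref{eqn:bru}) the drift is not even globally Lipschitz and the squared singular values could a priori collide, so the localizing stopping times must additionally keep $\min_{i\ne j}|\lambda^{(k)}_i - \lambda^{(k)}_j|$ bounded below, which is precisely where the no-collision result \cite{graczyk2014strong} enters. The non-Lipschitz diffusion coefficient is the other technical point, dealt with exactly by the Yamada--Watanabe functions $\phi_m$; the cancellation that makes this work, namely $d\langle Z_i\rangle_s \le \rho(|Z_i(s)|)^2\,ds$, depends essentially on driving both solutions by the same Brownian motion, which is legitimate only thanks to strong uniqueness.
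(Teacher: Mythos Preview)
The paper does not prove this theorem; it quotes it as \cite[Theorem 1.2]{geiss1994comparison} and then explains how to apply it. So there is no ``paper's own proof'' to compare against, and your proposal should be judged on whether it recovers the Gei\ss--Manthey argument.

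It does. The Yamada--Watanabe smoothing $\phi_m \to (\cdot)^+$, the bound $\tfrac12\phi_m''(Z_i)\,d\langle Z_i\rangle \le \tfrac1m\,ds$ coming from hypothesis~2, the intermediate-point decomposition $a_i(X^{(1)}) - a_i(v) + a_i(v) - a_i(X^{(2)})$ that isolates a one-coordinate change and a quasi-monotone change, and the closing Gronwall/Bihari step --- this is exactly the architecture of the Gei\ss--Manthey proof, and your localization $\tau_N$ matches what the paper itself sketches just below the theorem (``They (implicitly) define the stopping time $\vartheta_N$\dots'').

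You have also correctly identified the one genuine soft spot: the theorem as stated in the paper assumes only that $a$ is \emph{continuous}, but your drift step needs a quantitative modulus (local Lipschitz, or an Osgood-type $\omega$) to feed into Gronwall/Bihari. This is not a flaw in your argument --- it is a slight imprecision in the paper's restatement of the theorem. Indeed, the paper's own discussion immediately after the statement says that Gei\ss{} and Manthey ``use the fact that $a$ is Lipschitz on the restricted domain $\Vert X\Vert \le N$,'' and for the intended application to (\ref{eqn:bru}) the paper explicitly localizes to $\Lambda_{1/m}$, where the drift is genuinely Lipschitz. So your caveat is exactly right and is already anticipated by the paper; with local Lipschitz regularity of $a$ in hand (after localization), your argument goes through as written.
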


Setting $\rho(u) := \sqrt{u}$, the SDE (\ref{eqn:bru}) satisfies the conditions of the Gei\ss-Manthey theorem, except that our domain for both $a_i$ and $\sigma_i$ is $\Lambda$, not $\mathbb{R}^n$.  We address these two coefficients in turn.

First we deal with the drift coefficient $a_i$, using a standard localization argument already implicit in the proof of Gei{\ss} and Manthey.  They (implicitly) define the stopping time $\vartheta_N$ to be the first time $\Vert X^{(1)} \Vert \ge N$ or $\Vert X^{(2)} \Vert \ge N$, and use the fact that $a$ is Lipschitz on the restricted domain $\Vert X \Vert \le N$ to show that
\[ \mathbb{P}\left[X^{(1)}_i(t) \le X^{(2)}_i(t)  \text{ for all } 0 \le t \le \vartheta_{N}\right] = 1.\] 
Since strong solutions exist for all time, we have $\vartheta_N \to \infty$ as $N \to \infty$ almost surely, which proves the theorem.  We modify this strategy for our SDE (\ref{eqn:bru}) in the standard way:  Define the stopping time $\tau_{1/m}$ to be the first time either $\lambda^{(1)}$ or $\lambda^{(2)}$ leaves the set 
\[\Lambda_{1/m} := \{\lambda \in \Lambda: |\lambda_i - \lambda_{i+1}| > 1/m \text{ for all }1 \le i \le n-1.\}.\]  Since strong solutions starting in $\Lambda$ stay in $\Lambda$ for all $t \ge 0$ and are continuous, we have $\tau_{1/m} \to \infty$ as $m \to \infty$ almost surely.  Since our $a$ is Lipschitz on $\Lambda_{1/m}$, the proof of Theorem \ref{thm:geissmanthey} shows that \[\mathbb{P}\left[\lambda^{(1)}_i(t) \le \lambda^{(2)}_i(t)  \text{ for all } 0 \le t \le \tau_{1/m}\right] = 1\] for all $m$. Taking $m \to \infty$, the result follows.

Finally, we address the diffusion coefficient $\sigma_i(\lambda) = 2\sqrt{\lambda_i}/n$.  The standard fix is to first modify the SDE to have diffusion coefficients $2\sqrt{|\lambda_i|}/n$ for all $i$, so that the domain of $\sigma_i$ is enlarged to $\mathbb{R}^n$ and Theorem \ref{thm:geissmanthey} may be applied. For this modified SDE, note that the constant zero function $\lambda^{(1)}(t) = 0$ is a strong solution.  Now let $\lambda^{(2)}$ be any solution with $\lambda^{(2)}_i(0) \ge 0$ for all $i$.  Applying Theorem \ref{thm:geissmanthey} to $\lambda^{(1)}$ and $\lambda^{(2)}$, we conclude that in fact, $\lambda^{(2)}(t) \ge 0$ for all $t \ge 0$.  Thus, the absolute value bars in the modified SDE can be removed a posteriori.  This argument is used, for example, when setting up the SDE for the so-called \emph{Bessel process}, which shares this square-root diffusion coefficient---see \cite[\S XI.1]{revuz2013continuous} for details.

\end{document}